\titleformat{\subsection}[hang]
{\filcenter\bf}
{\thesubsection.}
{1pt}
{}
\declaretheoremstyle[bodyfont=\normalfont]{normalbody}
\declaretheorem[numberwithin=section,name=Theorem]{theorem}
\declaretheorem[sibling=theorem,name=Corollary]{corollary}
\declaretheorem[sibling=theorem,name=Lemma]{lemma}
\declaretheorem[sibling=theorem,name=Proposition]{proposition}
\newcommand{\Z}{\mathbb{Z}}
\newcommand{\N}{\mathbb{N}}
\newcommand{\C}{\mathbb{C}}
\renewcommand{\P}{\mathbb{P}}
\newcommand{\Hom}{\operatorname{Hom}}
\newcommand{\Aut}{\operatorname{Aut}}
\newcommand{\id}{\mathrm{id}}
\newcommand{\Pic}{\mathrm{Pic}}
\DeclareFontFamily{U}{wncy}{}
\DeclareFontShape{U}{wncy}{m}{n}{<->wncyr10}{}
\DeclareSymbolFont{mcy}{U}{wncy}{m}{n}
\DeclareMathSymbol{\Sh}{\mathord}{mcy}{"58}
\DeclareFontFamily{U}{wncy}{}
\DeclareFontShape{U}{wncy}{m}{n}{<->wncyr10}{}
\DeclareSymbolFont{mcy}{U}{wncy}{m}{n}
\DeclareMathSymbol{\Ch}{\mathord}{mcy}{"51}
\title{Decomposition of jacobians of Generalized Fermat curves}
\author{Gary Martinez-Nuñez}
\address{Departamento de Matemáticas, Facultad de Ciencias, Universidad de Chile}
\email{gary.martinez@ug.uchile.cl}
\begin{document}

\begin{abstract}
We give a decomposition of the jacobian variety of a generalized Fermat curve. This extends a result obtained by Auffarth, Lucchini Arteche and Rojas on Humbert-Edge curves, which are a particular case of generalized Fermat curves. We also compute the number of elliptic curves appearing in the decomposition for some of these curves.\\

\noindent \textbf{Keywords:} Generalized Fermat curves, jacobian varieties, Prym-Tyurin varieties.\\
\textbf{ MSC codes (2020):} 14L30, 14K99.
\end{abstract}

\maketitle

%
%
%
%

\section{Introduction}

Throughout this article, we work over the field of complex numbers. Let $X$ be a smooth projective curve. The jacobian variety $J(X)$ of $X$ is a principally polarized abelian variety equipped with a canonical morphism $X \to J(X)$, which can be defined by a universal property as follows: for any abelian variety $A$ endowed with a morphism from the curve $X \to A$, there exists an embedding $J(X) \to A$ such that $X \to A$ factors through it. The jacobian variety $J(X)$ of a smooth curve $X$ is called completely decomposable if it is isogenous to a product of elliptic curves. Ekedahl and Serre were interested in determining whether, for every $g \in \mathbb{N}$, there exists a curve $X$ of genus $g$ whose jacobian variety $J(X)$ is completely decomposable. In \cite{ES93}, the authors provided several examples of such curves, but only for certain values of $g$, all bounded by $1279$, leaving several gaps in between. Some of these gaps have been filled by various authors by considering the action of the group of automorphisms of the curve $X$ on $J(X)$. This method is known as the “group algebra decomposition” of $J(X)$. The most recent results in this direction were achieved by Paulhus and Rojas in \cite{PR17}. The problem of decomposing jacobians of curves with group actions has been of interest even before the work of Ekedahl and Serre. Examples include the results by Kani and Rosen in \cite{KR89}, and the work of Aoki in \cite{Aok91}.

The question posed by Ekedahl and Serre remains open, and many mathematicians have contributed to it. A natural way to approach the Ekedahl-Serre question is by studying families of curves with variable genera. This strategy could potentially lead to identifying an unbounded set of genera admitting a smooth projective curve of such a genus with a completely decomposable jacobian. For instance, Barraza and Rojas, in \cite{BR15}, obtained the group algebra decomposition of the jacobian variety of Fermat curves. As another example, Auffarth, Lucchini-Arteche, and Rojas, in \cite{ALAR21}, provided a decomposition of the jacobian variety of Humbert-Edge curves, which we discuss in more detail below. While none of these works provide a definitive answer to the question, the latter study exhibits an intriguing property: all the factors are Prym-Tyurin varieties for the curve.

In this article, we consider the family of \emph{generalized Fermat curves}, introduced in \cite{GDHL09}. Given a generalized Fermat curve $X_{(n,k)}$ of type $(n,k)$ with a generalized Fermat group $E_{(n,k)}$, which is isomorphic to $(\mathbb{Z}/k\mathbb{Z})^{n}$, there exists a set $S := {\sigma_{0}, \dots, \sigma_{n}} \subset E$ such that $S$ generates $E$, $\sigma_{0} \cdots \sigma_{n} = 1$, and each $\sigma_{i}$ fixes a point. An interesting property of generalized Fermat curves is that, for $T \subset S$, the quotient $X_{T} := X_{(n,k)}/\langle T \rangle$ is a generalized Fermat curve of type $(n - |T|, k)$, and $E/\langle T \rangle$ is a generalized Fermat group for it.

A generalized Fermat curve of type $(n,2)$ is a Humbert-Edge curve of type $n$. Thus, for any $T \subset S$, $X_{T} = X_{(n,2)}/\langle T \rangle$ is a Humbert-Edge curve of type $n - |T|$ with structural group $E/\langle T \rangle$. In this case, by \cite[Proposition 3.5]{FMZ18}, there exists an isogenous decomposition
\[J(X_{T})\sim A\oplus JX_{T}^{-},\]
where $A$ is an abelian subvariety of $J(X_{T})$ determined by $S \setminus T$, and $JX_{T}^{-}$ is the neutral connected component of
\[\{x\in J(X_{T}) \mid \overline{\sigma_{i}}(x)=-x \textrm{ for }\sigma_{i}\in S\smallsetminus T\},\]
where $\overline{\sigma_{i}}$ is the image of $\sigma_{i}$ in $E/\langle T \rangle$. In \cite{ALAR21}, the authors study the jacobian variety of Humbert-Edge curves and provide the following decomposition:

\begin{theorem}\cite[Theorem 2.1, Theorem 4.1]{ALAR21}\label{thmproceedings1}
Let $X_{n}$ be a Humbert-Edge curve of type $n\geq 3$ and genus $g_{n}$. Then we have
the following decomposition of $J(X_{n})$:

\[\varphi:\bigoplus_{\substack{T\subset S \\ |T|\leq n-3 }}\pi_{T}^{*}J X_{T}^{-} \to J (X_{n}),\] where $\pi_{T}:X_{n} \to X_{T}$ is the natural projection and $\varphi$ is an isogeny. Moreover,
\begin{enumerate}
    \item If $n-|T |$ is even, then
$JX_{T}^{-}$ is trivial; and if $n-|T|=2m+1$, then $\dim JX_{T}^{-}=m$.
    \item For
$1 \leq m \leq \left\lfloor \frac{n-1}{2} \right\rfloor$, there are exactly $n+1 \choose 2m+2$ summands of dimension m.
    \item Each factor is a Prym-Tyurin variety of exponent $2^{n-3}$ with respect to $X_{n}$.\label{item Prym-Tyurin} 
    \item  The kernel of the isogeny $\varphi$ is of order $(2^{n-3})^{g_{n}}$.
\end{enumerate}
\end{theorem}

In this article, we study which of these properties on Humbert-Edge curves can be extended to generalized Fermat curves of type $(n,p)$, for $p$ a prime number. We give an extension of the decomposition part of \cref{thmproceedings1} for such curves, cf.~\cref{maintheo1} here below. In contrast, we prove that part \eqref{item Prym-Tyurin} of \cref{thmproceedings1} does not hold for $p\geq 5$, a prime number. Indeed, we prove that in such a case, none of the factors in the decomposition is a Prym-Tyurin variety. We also compute the number of elliptic curves appearing in the decomposition of the jacobian of some generalized Fermat curves.

A decomposition comparable to the one we give in \cref{maintheo1} was presented by Carvacho, Hidalgo, and Quispe in \cite[Theorem 4.4]{CHQ16}. We compare these two results below.

\subsection*{Main results}
Let $p$ be a prime number, let $X_{(n,p)}$ be a generalized Fermat curve of type $(n,p)$ and let $E_{(n,p)}\leq \Aut(X_{(n,p)})$ be isomorphic to $\left(\Z/p\Z\right)^{n}$ with generators $\sigma_{0},\dots,\sigma_{n}\in E_{(n,p)}$ such that $\sigma_{0}\cdots\sigma_{n}=1$ and each of the $\sigma_{i}$ fixes a point. Denote $S:=\{\sigma_{0},\dots,\sigma_{n}\}$.  The quotient of $X_{(n,p)}$ by any $\langle \sigma_{i} \rangle$ is a generalized Fermat curve of type $(n-1,p)$ whose structural group is $E_{(n,p)}/\langle \sigma_{i} \rangle$.  There are natural morphisms
\[\pi_{i}:X_{(n,p)}\to (X_{(n,p)}/\langle\sigma_{i}\rangle)\qquad\text{and}\qquad\pi_{i}^{*}:J(X_{(n,p)}/\langle\sigma_{i}\rangle)\to J(X_{(n,p)}).\]
In general, if $T\subset S$, $X_{T}:=X_{(n,p)}/\langle T\rangle$ is a generalized Fermat curve of type $(n-|T|,p)$ and structural group $E_{T}:=E_{(n,p)}/\langle T\rangle$, with natural morphisms
\[\pi_{T}:X_{(n,p)}\to X_{T}\qquad\text{and}\qquad\pi_{T}^{*}:J(X_{T})\to J(X_{(n,p)}).\]

In \cite{ALAR21}, the authors give a decomposition of the jacobian of a generalized Fermat curve of type $(n,2)$ and prove that such a decomposition is given by Prym-Tyurin varieties. The main result of this article generalizes the decomposition of \cite{ALAR21} and proves that none of the factors of the decomposition is a Prym-Tyurin variety for $p\geq 5$, concluding that $p=2$ is a particular case.  

\begin{theorem}\label{maintheo1}
Let $X_{(n,p)}$ be a generalized Fermat curve of type $(n,p)$ and $E_{(n,p)}\leq \Aut(X_{(n,p)})$ isomorphic to $\left(\Z/p\Z\right)^{n}$ with generators $\sigma_{0},\dots,\sigma_{n}\in E_{(n,p)}$ such that each of the $\sigma_{i}$ has a degree $p$ fixed point and $\sigma_{0}\cdots\sigma_{n}=1$. Denote $S:=\{\sigma_{0},\dots,\sigma_{n}\}$ and, for $T\subset S$, define $S_{T}$ as the image of $S$ in $E_{T}$ and $\mathcal{H}_{T}^{S}(p):=\{H\leq E_{T} \mid [E_{T}:H]=p,\, H\cap S_{T}=\emptyset\}$. Then, \[J(X_{(n,p)})\sim \bigoplus_{\substack{T\subset S \\ n-|T|\geq 2}}\bigoplus_{\substack{ H\in\mathcal{H}_{T}^{S}(p)}}\pi_{H}^{*}J(X_{T}/H),\] where $\pi_{H}:X_{(n,p)}\to X_{T}/H$ denotes the natural projection. Moreover, 
	\begin{enumerate} 
		\item $\dim \pi_{H}^{*}J(X_{T}/H)=\frac{(n-|T|-1)(p-1)}{2}$,
		\item \label{maintheo1 part ii} the number of factor of dimension $\frac{(n-|T|-1)(p-1)}{2}$ appearing in the decomposition is \[{n+1\choose |T|}\dfrac{(p-1)^{n-|T|}-(-1)^{n-|T|}}{p}\] and
		\item If $p\geq 5$ the subvarieties $\pi_{H}^{*}J(X_{(n,p)}/H)$ are not Prym-Tyurin varieties for $X_{(n,p)}$.
	\end{enumerate}
    
\end{theorem}

In the case of $n=2$, \cref{maintheo1} gives a decomposition of the jacobian of a classical Fermat curve. Another decomposition of this particular jacobian was given in  \cite[Theorem 4.5]{BR15}.

Concerning the decomposition given in \cite[Theorem 4.4]{CHQ16}, their result gives an explicit description of the curves $X_{T}/H$ of \cref{maintheo1}. However, it is not clear from the statement how many factors of fixed dimension are and how they are related to the subgroups of $E_{(n,k)}$. Moreover, our approach is recursive and group theoretical, which gives a different insight on this question, complementing the more analytic approach from \cite{CHQ16}.

On the Ekedahl-Serre questions, we give a lower bound for the number of elliptic curves appearing in the decomposition of the jacobian of a generalized Fermat curve of type $(n,3)$.

\begin{proposition}\label{mainproposition}
The jacobian of a generalized Fermat curve of type $(n,3)$ has at least
\[\dfrac{n(n+1)(n-1)(3n-4)}{12}\]
 elliptic curves.
\end{proposition}
	
\subsection*{Structure of the article}

In \cref{Section preliminaries}, we do some preliminaries on jacobian varieties and generalized Fermat curves.

In \cref{Section decomposition}, we provide a decomposition of the tangent space of the jacobian of a generalized Fermat curve, which descends to a decomposition of the jacobian variety. Thus, we prove the decomposition part of \cref{maintheo1} and compute the dimension of each factor in such decomposition. We also show how the decomposition extends the results about Humbert-Edge curves (cf. \cref{thmproceedings1}).

In \cref{sectionprymtyurin}, we prove the remaining part of \cref{maintheo1}, i.e. we prove that, when $p\geq 5$ is a prime number, none of the factors of the decomposition stated in \cref{maintheo1} is a Prym-Tyurin variety.

In \cref{section p=3 ekedahl-serre question}, we give some applications of \cref{maintheo1} to the Ekedahl-Serre questions. In particular, we prove \cref{mainproposition} and we recover some known results on the complete decomposability of the jacobian of generalized Fermat curves of type $(3,2)$, $(4,2)$, $(5,2)$, and $(3,3)$.

%
%
	
\subsection*{Acknowledgements}

I would like to thank Dr.~Giancarlo Lucchini Arteche, Dr.~Rubí Rodriguez, and Dr.~Sebastián Reyes-Carocca for many enriching conversations and Dr.~Sebastián Reyes-Carocca for reading this article and sharing useful observations. The author also thanks the anonymous referee for proofreading this work and who share many useful observations. This work was partially supported by ANID via Beca de Doctorado Nacional 2021 Folio 21211482.

\section{Preliminaries}\label{Section preliminaries}

\subsection{jacobian varieties}

Given a smooth projective curve $C$, its jacobian variety $(J(C),\Theta)$ is a principally polarized abelian variety with a canonical morphism $C\to J(C)$ satisfying the following universal property: if $C\to A$ is a morphism to an abelian variety, then there exists a unique factorization 
\[\xymatrix{ & C \ar[dl] \ar[dr] & \\ J(C) \ar[rr] & & A .}\]

From the universal property, for any morphism of smooth projective curves $\pi:C\to C'$ we have a canonical map $\pi^{*}:J(C')\to J(C)$, which is also a morphism of polarized abelian varieties. We can understand the image of this morphism when $C'=C/G$, where  $G\leq \Aut(C)$.

\begin{lemma}\label{lemmainvariantpartandquotient}
Let $C$ be a projective curve and $G\leq \Aut(C)$ acting on $C$ in the obvious way and $\pi:C\to C/G$ the natural projection. Then 
\[\pi^{*}J(C/G)= (J(C)^{G})^{0}. \]
\end{lemma}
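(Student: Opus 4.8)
The plan is to prove the two inclusions separately, getting one from functoriality of the pullback and the other from a dimension count on tangent spaces at the origin, and then to conclude using that an inclusion of abelian subvarieties of equal dimension is an equality.

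First I would establish $\pi^{*}J(C/G)\subseteq (J(C)^{G})^{0}$. Since $\pi\circ g=\pi$ for every $g\in G$, contravariance of the pullback on Jacobians gives $\pi^{*}=g^{*}\circ\pi^{*}$, so every element in the image of $\pi^{*}$ is fixed by $g^{*}$ for all $g\in G$; that is, $\pi^{*}J(C/G)\subseteq J(C)^{G}$. As the image of a homomorphism of abelian varieties is a connected abelian subvariety containing $0$, it must lie in the identity component $(J(C)^{G})^{0}$.

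For the reverse inclusion I would show both sides have the same dimension. On the one hand, for a nonconstant morphism of smooth projective curves the identity $\pi_{*}\circ\pi^{*}=[\deg\pi]$ on $J(C/G)$ forces $\ker\pi^{*}$ into the $(\deg\pi)$-torsion, hence $\ker\pi^{*}$ is finite and $\dim\pi^{*}J(C/G)=\dim J(C/G)=g(C/G)$. On the other hand, because $|G|$ is invertible in the base field, the fixed subscheme $J(C)^{G}$ is smooth, so $T_{0}(J(C)^{G})^{0}=(T_{0}J(C))^{G}$ and thus $\dim (J(C)^{G})^{0}=\dim (T_{0}J(C))^{G}$. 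It then remains to match these two numbers: using $T_{0}J(C)\cong H^{1}(C,\mathcal{O}_{C})\cong H^{0}(C,\Omega^{1}_{C})^{*}$ as $G$-representations (equivariant Serre duality) together with the fact that, when $|G|$ is invertible, a representation and its dual have invariant subspaces of equal dimension, one gets $\dim (T_{0}J(C))^{G}=\dim H^{0}(C,\Omega^{1}_{C})^{G}$; and the standard identification of $H^{0}(C/G,\Omega^{1}_{C/G})$ with the invariant differentials $H^{0}(C,\Omega^{1}_{C})^{G}$ gives $\dim H^{0}(C,\Omega^{1}_{C})^{G}=g(C/G)$. Combining, $\dim (J(C)^{G})^{0}=g(C/G)=\dim\pi^{*}J(C/G)$, and since $\pi^{*}J(C/G)\subseteq (J(C)^{G})^{0}$ with both connected of the same dimension, they coincide.

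The step I expect to be the main obstacle is the identification of the two dimensions: it rests on the $G$-equivariance of Serre duality and on the coprimality of $|G|$ with the characteristic, which is exactly what simultaneously guarantees smoothness of the fixed locus (so that $T_{0}(J(C)^{G})^{0}=(T_{0}J(C))^{G}$) and the equality of invariant dimensions for a representation and its dual. In the setting of this paper, where $(k,q)=1$, this hypothesis is automatically available, so the argument applies directly to the groups $E_{(n,k)}$ and their subgroups.
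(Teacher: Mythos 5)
Your proof is correct, but it takes a genuinely different route from the paper's. The paper's argument is built on the norm endomorphism $p_{G}:J(C)\to J(C)$, $p_{G}(x):=\sum_{g\in G}g(x)$: it cites \cite[Proposition 5.2]{AngelRubi} for the identity $\operatorname{im}(p_{G})=\pi^{*}J(C/G)$, and then identifies $\operatorname{im}(p_{G})=(J(C)^{G})^{0}$ in one stroke, since the image is connected, lies in $J(C)^{G}$, and contains $p_{G}\bigl((J(C)^{G})^{0}\bigr)=|G|\cdot (J(C)^{G})^{0}=(J(C)^{G})^{0}$, multiplication by $|G|$ being surjective on an abelian variety. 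You instead prove the inclusion $\pi^{*}J(C/G)\subseteq (J(C)^{G})^{0}$ by functoriality and then force equality by matching dimensions, via smoothness of the fixed locus, equivariant Serre duality, and descent of invariant differentials. Each approach has its merits: the paper's is shorter and, notably, nowhere needs $|G|$ to be prime to the characteristic (surjectivity of $[|G|]$ on $(J(C)^{G})^{0}$ holds in any characteristic), so it establishes the lemma as stated, with no tameness hypothesis, modulo the generality of the cited reference; yours is self-contained --- it replaces the external citation by standard facts --- but genuinely requires $(|G|,\operatorname{char} F)=1$, both for the identification $T_{0}(J(C)^{G})=(T_{0}J(C))^{G}$ and for the equality $H^{0}(C,\Omega^{1}_{C})^{G}\cong H^{0}(C/G,\Omega^{1}_{C/G})$, which can fail under wild ramification. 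Since every group to which the paper applies this lemma is a subgroup of $E_{(n,k)}\cong(\Z/k\Z)^{n}$ with $(k,q)=1$, your hypothesis is always met in context, so the loss of generality is harmless here; you were right to flag it explicitly as the load-bearing assumption.
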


\begin{proof}
Let $p_{G}:J(C)\to J(C)$ be given by $p_{G}(x):=\sum_{g\in G}g(x)$. By \cite[Proposition 5.2]{CR06} we have $\textrm{im}(p_{G})=\pi^{*}(J(C/G))$. The image of $p_{G}$ is $(J(C)^{G})^{0}$ because it is connected, $p_{G}|_{J(C)^{G}}$ is multiplication by $|G|$, and $\textrm{im}(p_{G})\subset J(C)^{G}$. Then, $(J(C)^{G})^{0}=\pi_{G}^{*}(J(C/G))$.
\end{proof}

When the group is cyclic, the kernel of $\pi^{*}:J(C/G)\to G$ can be calculated.

\begin{lemma}\label{lemmacyclic}
Let $C$ be a smooth projective curve and $G\leq \Aut(C)$ a cyclic finite subgroup acting with no fixed points. If $\pi: C\to C/G$ is the natural projection and $\pi^{*}:J(C/G)\to J(C)$ the induced morphism between their jacobians, then $\ker(\pi^{*})\cong G$.
\end{lemma}

\begin{proof}
This lemma follows from the proof of \cite[Proposition 11.4.3]{BL04}.
\end{proof}

On the decomposability of the jacobian, we have the following results.

\begin{proposition}\label{proposition induced decomposability}
Let $\pi:C'\to C$ be a Galois covering of smooth projective curves. If $J(C')$ is completely decomposable, then $J(C)$ is completely decomposable.
\end{proposition}

\begin{proof}
Let us suppose that $C'$ is completely decomposable. Given that $J(C)$ is isogenous to an abelian subvariety of $J(C')$, by the Poincaré Reducibility Theorem, we have that each simple factor appearing in $J(C)$ is an elliptic curve.
\end{proof}

\begin{proposition}\cite[Lemma 6.1]{CHQ16}\label{proposition Lemma 6.1 carvachohidalgoquispe}
Let $C$ be a smooth projective curve of genus two. If $C$ admits an automorphism of degree three with four fixed points, then $J(C)$ is completely decomposable.
\end{proposition}

A principally polarized abelian variety $(A,\Xi)$ is said to be a \textit{Prym-Tyurin variety} if there exists a smooth projective curve $C$ such that $A$ is an abelian subvariety of $J(C)$ such that 
\[\Theta|_{A}\equiv e \Xi,\]
where $\Theta$ is the canonical polarization of $J(C)$ and $e$ is the \textit{exponent of the Prym-Tyurin}. If the curve $C$ is given, then we say that $(A,\Xi)$ is a \textit{Prym-Tyurin variety for $C$}. 

\begin{proposition}\cite[Lemma 12.3.1]{BL04}\label{proposition lemma 12.3.1 birkenhakelange}
Let $C$ and $C'$ be two smooth projective curves of genus $\geq 1$. Let $f:C\to C'$ be a morphism of degree $n$. Denote by $(J,\Theta)$ and $(J',\Theta')$ respectively the corresponding jacobians. Then, 
\[\pi^{*}\Theta\equiv n\Theta'.\] 
\end{proposition}

\subsection{Generalized Fermat curves}

For $n,k\in \N$, with $n,k\geq 2$, a generalized Fermat curve of type $(n,k)$, denoted by $X_{(n,k)}$, is a non-singular irreducible projective algebraic curve defined over $F$ such that $\Aut(X_{(n,k)})$ has a subgroup $E$ isomorphic to $(\Z/k\Z)^n$ and $X_{(n,k)}/E$ is isomorphic to the projective line with exactly $(n+1)$ branch points, each one of order $k$. This group $E$ is called a generalized Fermat group of type $(n,k)$. 

The following results about generalized Fermat curves can be found in \cite{GDHL09}.

\begin{proposition}\cite[Preliminaries]{GDHL09}\label{equationgenusgenfermatcurve} 
A generalized Fermat curve $X_{(n,k)}$ of type $(n,k)$ is of genus
\[g_{(n,k)}=\frac{2+k^{n-1}((n-1)(k-1)-2)}{2}.\]
\end{proposition}

Let $S:=\{\sigma_{0},\dots,\sigma_{n}\}$ be a set of generators of $E$ such that each $\sigma_{i}$ is of degree $k$ and fixes a point. The curves $X/\langle\sigma_{i}\rangle$, for $\sigma_{i}\in S$, are generalized Fermat curves of type $(n-1,k)$, which respective generalized Fermat groups are $E/\langle\sigma_{i}\rangle$.

\begin{proposition}\cite[Corollary 2]{GDHL09}\label{gonhidley corollary 2}
If $a\in E$ fixes a point of $X_{(n,k)}$, then $a=\sigma^{l}$ for some $\sigma\in S$ and $l\in\N$.
\end{proposition}

\section{A decomposition of jacobians of generalized Fermat curves}\label{Section decomposition}

Let $k\geq 2$, let $X_{(n,k)}$ be a generalized Fermat curve and $E_{(n,k)}\leq \Aut(X_{(n,k)})$ isomorphic to $(\Z/k\Z)^{n}$, with generators $\sigma_{0},\dots,\sigma_{n}\in E_{(n,k)}$ such that $\sigma_{0}=\sigma_{1}^{-1}\cdots\sigma_{n}^{-1}$ and each of the $\sigma_{i}$ fixes a point.

In the first part of this section, we give a decomposition of generalized Fermat curve of type $(n,k)$, with $n,k\in\N$ and $n\geq 3$ and $k\geq 2$. In the second part, we focus on generalized Fermat curves of type $(n,p)$ with $p$ a prime number, which is given by étale quotient of generalized Fermat curves. The last part compares the results about Humbert-Edgecurves \cite{ALAR21} and \cref{maintheo1}.

\subsection{A decomposition of the jacobian of generalized Fermat curves of type $(n,k)$}\label{sectionfirstdecomposition}

Let $k\geq 2$, let $X_{(n,k)}$ be a generalized Fermat curve and $E_{(n,k)}\leq \Aut(X_{(n,k)})$ isomorphic to $(\Z/k\Z)^{n}$, with generators $\sigma_{0},\dots,\sigma_{n}\in E_{(n,k)}$ such that $\sigma_{0}=\sigma_{1}^{-1}\cdots\sigma_{n}^{-1}$ and each of the $\sigma_{i}$ fixes a point of degree $k$. The group $E_{(n,k)}$ acts naturally on the jacobian of the curve and, therefore, on the tangent space $T_{0}(J(X_{(n,k)}))$. This induced action of $E_{(n,k)}$ on $T_{0}(J(X_{(n,k)}))$ gives the following decomposition
\begin{equation}\label{coarsedecomposition} 
T_{0}(J(X_{(n,k)}))=\bigoplus_{\chi\in \mathcal{C}}V_{\chi},
\end{equation} 
where $\mathcal{C}$ denotes the set of characters of $E_{(n,k)}$. More precisely, for $\chi\in\mathcal{C}$ we have 
\[V_{\chi}:=\{v\in T_{0}(J(X_{(n,k)})) \mid \sigma\cdot v=\chi(\sigma)v\textrm{ for all }\sigma\in E_{(n,k)}\},\]
 and, for $v\in V$, we have $v=\sum_{\chi\in\mathcal{C}} v_{\chi}$ with 
\[v_{\chi}:=\frac{1}{|E_{(n,k)}|}\sum_{\sigma\in E_{(n,k)}}\chi(\sigma)^{-1}\sigma\cdot v\in V_{\chi}
.\]
  This decomposition is closely related to the index $d$ subgroups of $E_{(n,k)}$, for $d$ dividing $k$, as a consequence of the following lemma.

\begin{lemma}\label{Hdecomposition}
Let $\chi:E_{(n,k)}\to \mu_{k}$ be a character and $H:=\ker(\chi)$. Then \[\bigoplus_{\substack{\chi\in\mathcal{C}\\ \ker{\chi}=H}}V_{\chi}=T_{0}(J(X_{(n,k)}))^{H}.\]
\end{lemma}

\begin{proof}
Let us denote
\[W:=\bigoplus_{\substack{\chi\in\mathcal{C}\\ \ker{\chi}=H}}V_{\chi}.\]
On the one hand, if $v\in W$ and $\sigma\in H$, then
\[\sigma\cdot v=\sum_{\substack{\chi\in \mathcal{C}\\\ker{\chi}=H}}\chi(\sigma)v_{\chi}=\sum_{\substack{\chi\in\mathcal{C}?\ker{\chi}=H}}v_{\chi}=v.\]
Thus, $v\in T_{0}(J(X_{(n,k)}))^H$.

On the other hand, suppose that $v\in T_{0}(J(X_{(n,k)}))^{H}$. We have
\[v=\sum_{\chi\in\mathcal{C}}v_{\chi}=\frac{1}{|E_{(n,k)}|}\sum_{\chi\in\mathcal{C}}\sum_{\sigma\in E_{(n,k)}}\chi(\sigma)^{-1}\sigma\cdot v.\]
Let $\chi\in\mathcal{C}$ be a character such that $\ker{\chi}\neq H$ and $\tau\in H$ such that $\chi(\tau)\neq 1$. Due to $\tau\cdot v=v$, we have \[\tau\cdot\sum_{\sigma\in E_{(n,k)}}\frac{1}{|E_{(n,k)}|\chi(\sigma)}\sigma\cdot v=\sum_{\sigma\in E_{(n,k)}}\frac{1}{|E_{(n,k)}|\chi(\sigma)}\sigma\cdot v.\] Furthermore, as $\tau\cdot v_{\chi}=\chi(\tau)v_{\chi}$, we have $(1-\chi(\tau))v_{\chi}=0,$ and hence $v_{\chi}=0$ for all $\chi\in\mathcal{C}$ such that $\ker{\chi}\neq H$. Thus, $v\in W$ and therefore
\[\bigoplus_{\substack{\chi\in\mathcal{C}\\ \ker{\chi}= H}}V_{\chi}=W=T_{0}(J(X_{(n,k)}))^{H}.\]
\end{proof}

A direct consequence of this lemma is that the subspace of $E_{(n,k)}$-invariant vectors is the trivial one.

\begin{corollary}\label{trivialchi}
 Let $\chi_{0}:E_{(n,k)}\to \mu_{k}$ be the trivial character. Then, the weight space $V_{0}$ associated to $\chi_{0}$ is trivial, i.e. \[V_{0}=\{0\}.\]
\end{corollary}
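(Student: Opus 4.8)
The plan is to apply Lemma \ref{Hdecomposition} with the trivial character $\chi_0$. Its kernel is the full group $H = E_{(n,k)}$, so the lemma gives
\[
V_0 = \bigoplus_{\substack{\chi\in\mathcal{C}\\ \ker\chi = E_{(n,k)}}} V_\chi = T_0(J(X_{(n,k)}))^{E_{(n,k)}},
\]
since the only character whose kernel is all of $E_{(n,k)}$ is $\chi_0$ itself. Thus it suffices to show that the subspace of $E_{(n,k)}$-invariant tangent vectors is zero.

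For this I would invoke Lemma \ref{lemmainvariantpartandquotient} with $C = X_{(n,k)}$ and $G = E_{(n,k)}$. That lemma identifies $\pi^*J(C/G)$ with the connected component $(J(C)^{G})^0$ of the invariant part of the Jacobian. Passing to tangent spaces at the origin, the invariant subspace $T_0(J(X_{(n,k)}))^{E_{(n,k)}}$ is exactly the tangent space of $(J(X_{(n,k)})^{E_{(n,k)}})^0 = \pi^*J(X_{(n,k)}/E_{(n,k)})$. Now by the defining property of a generalized Fermat curve, the quotient $X_{(n,k)}/E_{(n,k)}$ is isomorphic to the projective line $\P^1$. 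Since $J(\P^1)$ is trivial (the projective line has genus $0$), its image under $\pi^*$ is trivial, and hence its tangent space vanishes.

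Combining the two identifications yields $V_0 = T_0(J(X_{(n,k)}))^{E_{(n,k)}} = 0$, which is the claim.

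I do not expect a serious obstacle here, as the statement is essentially the concatenation of the two preceding lemmas with the genus-$0$ fact for the base $\P^1$. The only point requiring slight care is the passage between the invariant tangent space and the tangent space of the connected invariant subvariety: one should note that $(J(X_{(n,k)})^{E_{(n,k)}})^0$ and $J(X_{(n,k)})^{E_{(n,k)}}$ have the same tangent space at the origin (they differ only by finitely many connected components), so that $T_0\big((J(X_{(n,k)})^{E_{(n,k)}})^0\big) = T_0(J(X_{(n,k)}))^{E_{(n,k)}}$, and both equal the tangent space of $\pi^*J(X_{(n,k)}/E_{(n,k)}) = 0$.
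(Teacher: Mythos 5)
Your proposal is correct and follows exactly the paper's own argument: Lemma \ref{Hdecomposition} identifies $V_{0}$ with $T_{0}(J(X_{(n,k)}))^{E_{(n,k)}}$, Lemma \ref{lemmainvariantpartandquotient} identifies this (via the connected component of the invariant subvariety) with the tangent space of $\pi^{*}J(X_{(n,k)}/E_{(n,k)})$, and the quotient being $\P^{1}$ forces this to vanish. The care you take with $T_{0}\bigl((J(X_{(n,k)})^{E_{(n,k)}})^{0}\bigr)=T_{0}(J(X_{(n,k)}))^{E_{(n,k)}}$ is precisely the step the paper also makes explicit.
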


\begin{proof}
By \cref{Hdecomposition} it follows \[V_{0}=\bigoplus_{\substack{\chi\in\mathcal{C}\\ \ker\chi=E_{(n,k)}}}V_{\chi}=T_{0}(J(X_{(n,k)})^{E_{(n,k)}}).\] Given that $T_{0}((J(X_{(n,k)})^{E_{(n,k)}})^{0})=T_{0}(J(X_{(n,k)})^{E_{(n,k)}}$, we have \[V_{0}=T_{0}((J(X_{(n,k)})^{E_{(n,k)}})^{0}).\] By \cref{lemmainvariantpartandquotient}, $T_{0}((J(X_{(n,k)})^{E_{(n,k)}})^{0})=T_{0}(\pi_{E_{(n,k)}}J(X_{(n,k)}/E_{(n,k)}))$ and by definition $X_{(n,k)}/E_{(n,k)}\cong \P^{1}$. Thus, $T_{0}(\pi_{E_{(n,k)}}J(X_{(n,k)}/E_{(n,k)}))=T_{0}(J(\P^{1}))=\{0\}$ and therefore $V_{0}=\{0\}$.
\end{proof}

The decomposition in \eqref{coarsedecomposition} for the tangent space $T_{0}(J(X_{(n,k)}))$ descends to a decomposition of the jacobian variety $J(X_{(n,k)})$ of the generalized Fermat curve into abelian subvarieties. Such a decomposition is parametrized by all the subgroups $H\leq E_{(n,d)}$ of index $d$ with $d|k$ and $d>1$.

\begin{proposition}\label{firstdecomposition}
    Let $X_{(n,k)}$ be a generalized Fermat curve of type $(n,k)$ and $E_{(n,k)}$ as above. Denote $\mathcal{H}_{}(d):=\{H\leq E_{(n,k)} \mid [E_{(n,k)}:H]=d\}$. Then, 
    \[J(X_{(n,k)})\sim \bigoplus_{\substack{d|k,\, d>1 \\ H\in\mathcal{H}_{}(d)}}\pi_{H}^{*}(J(X_{(n,k)}/H)),\] 
    where $\pi_{H}:X_{(n,k)}\to \left(X_{(n,k)}/H\right)$ denotes the natural projection.
\end{proposition}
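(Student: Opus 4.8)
The plan is to transfer the character-space decomposition of the tangent space in \eqref{coarsedecomposition} into an isogeny decomposition of $J(X_{(n,k)})$ itself, using Lemma \ref{Hdecomposition} to organize the weight spaces $V_\chi$ according to the kernels of the characters. The key observation is that every nontrivial character $\chi:E_{(n,k)}\to\mu_k$ has kernel $H=\ker\chi$ of index $[E_{(n,k)}:H]=d$ for some divisor $d\mid k$ with $d>1$ (since $\mu_k$ is cyclic of order $k$, the image of $\chi$ is a cyclic group of order $d\mid k$, forcing the index to be exactly $d$). Thus the characters are partitioned by their kernels, and these kernels are exactly the subgroups $H$ arising in the index sets $\mathcal{H}(d)$.

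\begin{proof}[Proof sketch]
First I would decompose the tangent space by grouping the weight spaces according to the kernel of the character. Every character $\chi\in\mathcal{C}$ factors through a cyclic quotient of $E_{(n,k)}$, so its image lies in $\mu_k$ and has order $d=[E_{(n,k)}:\ker\chi]$, a divisor of $k$. Using Corollary \ref{trivialchi} to discard the trivial character (whose weight space vanishes), we obtain
\[
T_{0}(J(X_{(n,k)}))=\bigoplus_{\chi\in\mathcal{C}}V_{\chi}=\bigoplus_{\substack{d\mid k,\,1<d\\H\in\mathcal{H}(d)}}\ \bigoplus_{\substack{\chi\in\mathcal{C}\\\ker\chi=H}}V_{\chi}.
\]
By Lemma \ref{Hdecomposition}, the inner sum over characters with fixed kernel $H$ equals $T_{0}(J(X_{(n,k)}))^{H}$, so
\[
T_{0}(J(X_{(n,k)}))=\bigoplus_{\substack{d\mid k,\,1<d\\H\in\mathcal{H}(d)}}T_{0}(J(X_{(n,k)}))^{H}.
\]

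Next I would identify each $H$-invariant tangent subspace with the tangent space of the corresponding pullback abelian subvariety. Since $T_{0}((J(X_{(n,k)})^{H})^{0})=T_{0}(J(X_{(n,k)}))^{H}$ (the neutral component shares the tangent space at the origin), Lemma \ref{lemmainvariantpartandquotient} applied to the subgroup $H\leq E_{(n,k)}$ and the projection $\pi_{H}:X_{(n,k)}\to X_{(n,k)}/H$ gives $(J(X_{(n,k)})^{H})^{0}=\pi_{H}^{*}J(X_{(n,k)}/H)$. Hence $T_{0}(J(X_{(n,k)}))^{H}=T_{0}(\pi_{H}^{*}J(X_{(n,k)}/H))$, and the tangent-space decomposition reads
\[
T_{0}(J(X_{(n,k)}))=\bigoplus_{\substack{d\mid k,\,1<d\\H\in\mathcal{H}(d)}}T_{0}\bigl(\pi_{H}^{*}J(X_{(n,k)}/H)\bigr).
\]

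Finally I would promote this tangent-level direct sum to an isogeny of abelian varieties. The addition map $\mu:\bigoplus_{H}\pi_{H}^{*}J(X_{(n,k)}/H)\to J(X_{(n,k)})$ is a homomorphism of abelian varieties whose differential at the origin is precisely the identification of tangent spaces established above; since that differential is an isomorphism of vector spaces of equal dimension, $\mu$ is an isogeny, which yields the asserted decomposition $J(X_{(n,k)})\sim\bigoplus_{d\mid k,\,1<d,\,H\in\mathcal{H}(d)}\pi_{H}^{*}(J(X_{(n,k)}/H))$. The main obstacle is the last step: the $H$-invariant subspaces of the tangent space sum directly, but the abelian subvarieties $\pi_{H}^{*}J(X_{(n,k)}/H)$ need only intersect in finite kernels, so I must check that surjectivity together with the equality of dimensions (guaranteed by the direct-sum decomposition of tangent spaces) forces $\mu$ to have finite kernel, i.e.\ to be an isogeny rather than merely a surjection.
\end{proof}
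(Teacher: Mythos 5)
Your proposal is correct and follows essentially the same route as the paper's own proof: group the weight spaces $V_{\chi}$ by character kernels via Lemma \ref{Hdecomposition}, discard the trivial character by Corollary \ref{trivialchi}, identify each $T_{0}(J(X_{(n,k)}))^{H}$ with $T_{0}(\pi_{H}^{*}J(X_{(n,k)}/H))$ via Lemma \ref{lemmainvariantpartandquotient}, and then pass from the tangent-space decomposition to an isogeny. The only difference is that you spell out the final step (the addition map having bijective differential, hence finite kernel and full image), which the paper compresses into the remark that the $\pi_{H}^{*}J(X_{(n,k)}/H)$ are abelian subvarieties.
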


\begin{proof}
By \cref{Hdecomposition} and \cref{trivialchi} we have \[T_{0}(J(X_{(n,k)}))=\bigoplus_{\substack{d|k,\, 1<d \\ H\in\mathcal{H}_{}(d)}}T_{0}(J(X_{(n,k)}))^{H},\] because all the factors in the decomposition given by \cref{Hdecomposition} are parametrized by the index $d|k$ subgroups of $E_{(n,k)}$.

Let $H\in\mathcal{H}_{}(d)$ and $\pi_{H}:X_{(n,k)}\to \left( X_{(n,k)}/H\right)$ be the natural projection.  Given that $T_{0}((J(X_{(n,k)})^{H})^{0})=T_{0}(J(X_{(n,k)}))^{H}$ we have \[T_{0}(J(X_{(n,k)}))=\bigoplus_{\substack{d|k,\, 1<d \\ H\in\mathcal{H}_{}(d)}}T_{0}((J(X_{(n,k)})^{H})^{0})\] and, by \cref{lemmainvariantpartandquotient}, \[T_{0}(J(X_{(n,k)}))=\bigoplus_{\substack{d|k,\, 1<d \\ H\in\mathcal{H}_{}(d)}}T_{0}(\pi_{H}^{*}J(X_{(n,k)}/H)).\]
Finally, since the $\pi_{H}^{*}(J(X_{(n,k)}))$ are abelian subvarieties, the proposition holds.
\end{proof}

If $k$ is a prime number, the factors have a nicer behavior. In such a case, the factors are given by the jacobians of étale quotients of the curve.

\subsection{Decomposition by jacobians of étale quotients}\label{sectiondecomposition}

Let $p$ be a prime number, let $X_{(n,p)}$ be a generalized Fermat curve of type $(n,p)$ and $E_{(n,p)}\leq \Aut(X_{(n,p)})$ isomorphic to $\left(\Z/p\Z\right)^{n}$ with generators $\sigma_{0},\dots,\sigma_{n}\in E_{(n,p)}$ such that $\sigma_{0}\cdots\sigma_{n}=1$ and each of them fixes a point of degree $p$. Denote $S:=\{\sigma_{0},\dots,\sigma_{n}\}$. According to \cref{firstdecomposition}, the tangent space $T_{0}(J(X_{(n,p)}))$ can be written as a direct sum of abelian subvarieties $\pi_{H}^{*}J(X_{(n,p)}/H)$, where $H\in \mathcal{H}(p)$ and $\pi_{H}:X_{(n,p)}\to X_{(n,p)}/H$ is the natural projection. If $T\subset S$, then $X_{T}:=X_{(n,p)}/\langle T\rangle $ is also a generalized Fermat curve with structural group $E_{T}:=E_{(n,p)}/\langle T\rangle$. Therefore, we can define $\mathcal{H}_{T}(p)$ for all $T\subset S$ as \[ \mathcal{H}_{T}(p):=\{ H \leq E_{T} \mid [E_{T}:H]=p\},\] and the subset \[\mathcal{H}_{T}^{S}(p):=\{ H\leq E_{T} \mid [E_{T}:H]=p \quad \textrm{and}\quad H\cap S_{T}=\emptyset \},\] where $S_{T}$ is the image of $S$ on $E_{(n,p)}/\langle T\rangle$.

For $T\subset S$ and $H\in\mathcal{H}_{T}^{S}(p)$, it is easy to compute the dimension of the quotient $X_{T}/H$ using the Riemann-Hurwitz formula and the fact that the quotient $X_{T}/H$ is étale since all the ramification is condensed in the $\overline{\sigma_i}\in S_T$.

\begin{proposition}\label{propdimension} If $H\in\mathcal{H}_{T}^{S}(p)$, then
$X_{T}/H$ is of genus $\frac{(n-|T|-1)(p-1)}{2}$.
\end{proposition}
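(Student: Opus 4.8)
The plan is to realize $X_T/H$ as an \emph{étale} quotient of $X_T$ and then read off its genus from Riemann--Hurwitz together with the genus formula \eqref{equationgenusgenfermatcurve}. Write $m:=n-|T|$, so that $X_T$ is a generalized Fermat curve of type $(m,p)$ with structural group $E_T\cong(\Z/p\Z)^{m}$ and canonical generating set $S_T=\{\overline{\sigma_i}\}$. Since $[E_T:H]=p$ we have $|H|=p^{m-1}$, so the natural projection $\pi_H\colon X_T\to X_T/H$ is a Galois cover of degree $p^{m-1}$.

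The first, and only substantive, step is to check that $\pi_H$ is étale. For this I would invoke the defining feature of a generalized Fermat curve: the branch locus of $X_T\to X_T/E_T\cong\P^{1}$ consists of the $m+1$ cone points, and the stabilizer in $E_T$ of a point lying over the $i$-th cone point is the cyclic group $\langle\overline{\sigma_i}\rangle$ of order $p$. Consequently the only elements of $E_T$ with a fixed point on $X_T$ are the nontrivial elements of $\bigcup_i\langle\overline{\sigma_i}\rangle$. Now if some nontrivial power $\overline{\sigma_i}^{\,j}$ were to lie in $H$, then, since $\langle\overline{\sigma_i}\rangle$ has prime order $p$, we would get $\overline{\sigma_i}\in\langle\overline{\sigma_i}^{\,j}\rangle\subseteq H$, contradicting the hypothesis $H\cap S_T=\emptyset$. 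Hence $H$ contains no element acting with a fixed point, and $\pi_H$ is unramified.

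It then remains a routine computation. Applying \eqref{equationgenusgenfermatcurve} to type $(m,p)$ gives
\[2g(X_T)-2=p^{m-1}\bigl((m-1)(p-1)-2\bigr).\]
Since $\pi_H$ is étale of degree $p^{m-1}$, Riemann--Hurwitz yields $2g(X_T)-2=p^{m-1}\bigl(2g(X_T/H)-2\bigr)$, and dividing by $p^{m-1}$ gives $2g(X_T/H)-2=(m-1)(p-1)-2$, that is
\[g(X_T/H)=\frac{(m-1)(p-1)}{2}=\frac{(n-|T|-1)(p-1)}{2}.\]
The main obstacle is the étale-ness claim of the middle paragraph; once the stabilizers of the $E_T$-action on $X_T$ are identified with the cyclic groups $\langle\overline{\sigma_i}\rangle$, both the group-theoretic deduction from $H\cap S_T=\emptyset$ and the Riemann--Hurwitz bookkeeping are straightforward.
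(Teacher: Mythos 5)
Your proof is correct and follows essentially the same route as the paper: both establish that $\pi_H\colon X_T\to X_T/H$ is étale and then combine Riemann--Hurwitz with the genus formula \eqref{equationgenusgenfermatcurve}. The only difference is that you spell out the étale-ness argument (identifying the stabilizers with the groups $\langle\overline{\sigma_i}\rangle$ and using primality of $p$), whereas the paper delegates exactly this point to a citation of \cite[Corollary 2]{gonhidley}.
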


\begin{proof}
Let $g=\dim J(X_{T}/H)$ be the genus of $X_{T}/H$. By the Riemann-Hurwitz formula and \cref{gonhidley corollary 2} applied to $\pi_{H}:X_{T}\to \left(X_{T}/H\right)$ we have \[ 2-2g_{(n-|T|-1,p)}=p^{n-|T|-1}(2-2g), \] where $g_{(n-|T|-1,p)}$ is the genus of $X_{T}$, as stated in \eqref{equationgenusgenfermatcurve}. Then, by a direct computation, we have that  $X_{T}/H$ has genus $g=\frac{(n-|T|-1)(p-1)}{2}$.
\end{proof}

\begin{lemma}\label{lemmaklessthann}
   Let $T\subset S$. If $H\in \mathcal{H}_{T}(p)$, then $|H\cap S_{T}|<n-|T|$.
\end{lemma}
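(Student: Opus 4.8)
The plan is to translate the statement into linear algebra over $\F_p$ and then argue by contradiction. Write $m:=n-|T|$, so that $E_{T}\cong(\Z/p\Z)^{m}$ is an $m$-dimensional $\F_{p}$-vector space and, being an elementary abelian $p$-group, its subgroups are exactly its $\F_{p}$-subspaces. A subgroup $H$ of index $p$ is then precisely a hyperplane, i.e.\ a subspace of codimension $1$; in particular $H\neq E_{T}$. The set $S_{T}$ is the collection of the (nonzero) images $\overline{\sigma_{i}}$ of the generators $\sigma_{i}\in S\smallsetminus T$, of which there are exactly $m+1$.

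First I would record the two structural facts about these $m+1$ vectors. They span $E_{T}$, since $S$ generates $E_{(n,p)}$ while the images of the elements of $T$ are trivial; and they satisfy the relation $\sum_{\sigma_{i}\in S\smallsetminus T}\overline{\sigma_{i}}=0$, obtained by pushing $\sigma_{0}\cdots\sigma_{n}=1$ to the quotient $E_{T}$. Now $m+1$ vectors spanning an $m$-dimensional space have a space of linear relations of dimension exactly $(m+1)-m=1$, so this ``product relation'' spans all relations. Since every coefficient of that relation equals $1$, so does every coefficient of each of its scalar multiples is nonzero; hence no nontrivial relation is supported on a proper subset of the $m+1$ vectors. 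Equivalently, any $m$ of them are linearly independent and therefore form a basis of $E_{T}$. This step — that any $m$ of the $m+1$ distinguished generators form a basis — is the key input, and I expect it to be the main point to get right; everything after it is formal. (It is also a standard structural property of generalized Fermat groups and could simply be cited, but the derivation via the one-dimensional relation space makes the argument self-contained.)

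With this in hand the conclusion is immediate. Suppose, for contradiction, that $|H\cap S_{T}|\geq m$. Then $H$ contains at least $m$ of the $m+1$ vectors $\overline{\sigma_{i}}$, and by the previous paragraph any $m$ of them form a basis of $E_{T}$, so the subspace they generate is all of $E_{T}$. As $H$ is a subgroup containing these vectors, this forces $H=E_{T}$, contradicting $[E_{T}:H]=p$. Therefore $|H\cap S_{T}|\leq m-1<m=n-|T|$, which is exactly the claim. I would note that for $m\geq 2$ the $m+1$ images are automatically pairwise distinct and nonzero (a coincidence or vanishing would produce a relation supported on fewer than $m+1$ coordinates), so the counting of $|H\cap S_{T}|$ against the number of generators in $H$ is unambiguous; this is the only regime needed for the applications, where $n-|T|\geq 2$.
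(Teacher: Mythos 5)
Your proof is correct and follows essentially the same route as the paper's: assume $|H\cap S_{T}|\geq n-|T|$, observe that any $n-|T|$ elements of $S_{T}$ generate $E_{T}$, and conclude $E_{T}\leq H$, contradicting that $H$ is a proper subgroup of index $p$. The only difference is that you justify the key generating fact carefully via the one-dimensional space of $\F_{p}$-linear relations having full support, whereas the paper simply asserts it from $\sigma_{0}\cdots\sigma_{n}=1$ and the fact that any $n$ elements of $S$ generate $E_{(n,p)}$.
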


\begin{proof}
Let $H\in \mathcal{H}_{T}(p)$ and let us suppose that $|H\cap S_{T}|\geq n-|T|$. Given that $S$ generates $E_{(n,p)}$ with $n$ of its elements, it follows that $S_{T}$ generates $E_{T}:=E_{(n,p)}/\langle T\rangle$ with $n-|T|$ of its elements. This implies that $\langle H\cap S_{T}\rangle =E_{T}$, so $E_{T}\leq H$. This is a contradiction, because $H$ is a proper subgroup of $E_{T}$.
\end{proof}

The following results correspond to the decomposition part of \cref{maintheo1}.

\begin{theorem}\label{prooffirstpart}
Let $X_{(n,p)}$ be a generalized Fermat curve of type $(n,p)$ and $E_{(n,p)}$ as above. Then, with the notation above,  
\[J(X_{(n,p)})\sim \bigoplus_{\substack{T\subset S \\ n-|T|\geq 2}}\bigoplus_{\substack{H\in\mathcal{H}_{T}^{S}(p)}}\pi_{H}^{*}J(X_{T}/H),\] 
where $\pi_{H}:X_{(n,p)}\to X_{T}/H$ denotes the natural projection, and $\pi_{H}^{*}J(X_{T}/H)$ is of dimension $\frac{(n-|T|-1)(p-1)}{2}$.
\end{theorem}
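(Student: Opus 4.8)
The plan is to deduce the statement from the coarse decomposition in Proposition~\ref{firstdecomposition} by refining its indexing set. Since $p$ is prime, the only divisor $d\mid p$ with $1<d$ is $d=p$ itself, so Proposition~\ref{firstdecomposition} specializes to
\[J(X_{(n,p)}) \sim \bigoplus_{H \in \mathcal{H}(p)} \pi_H^* J(X_{(n,p)}/H),\]
the sum running over all index-$p$ (equivalently, codimension-one) subgroups $H$ of $E_{(n,p)} \cong (\Z/p\Z)^n$. The entire content of the theorem is then to reorganize this sum according to the interaction of each $H$ with the distinguished generating set $S$, so that the degenerate factors can be discarded.

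The key step is a bijection between $\mathcal{H}(p)$ and the set of pairs $(T, \overline{H})$ with $T \subset S$ and $\overline{H} \in \mathcal{H}_T^S(p)$. Given $H \in \mathcal{H}(p)$, I would set $T := H \cap S$ and $\overline{H} := H/\langle T \rangle$. First I would check that $\overline{H}$ is an index-$p$ subgroup of $E_T = E_{(n,p)}/\langle T \rangle$: this follows from $\langle T \rangle \subseteq H$ together with $[E_T : \overline{H}] = [E_{(n,p)} : H] = p$. Then I would verify $\overline{H} \cap S_T = \emptyset$: if the image $\overline{\sigma_j}$ of some $\sigma_j \in S \setminus T$ lay in $\overline{H}$, then $\sigma_j$ would lie in the preimage $H$, forcing $\sigma_j \in H \cap S = T$, a contradiction. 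The inverse map sends $(T, \overline{H})$ to the preimage $H$ of $\overline{H}$ under $E_{(n,p)} \to E_T$; this $H$ has index $p$, contains $\langle T \rangle$, and satisfies $H \cap S = T$ precisely because $\overline{H} \cap S_T = \emptyset$ excludes the surviving generators while those in $T$ map to the identity and hence lie in $H$. These two assignments are mutually inverse.

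Next I would match the jacobian factors across the bijection. Quotienting in stages gives $X_{(n,p)}/H = (X_{(n,p)}/\langle T \rangle)/\overline{H} = X_T/\overline{H}$, and the projection $\pi_H : X_{(n,p)} \to X_{(n,p)}/H$ factors as $X_{(n,p)} \xrightarrow{\pi_T} X_T \xrightarrow{\pi_{\overline{H}}} X_T/\overline{H}$; hence $\pi_H^* = \pi_T^* \circ \pi_{\overline{H}}^*$, and the abelian subvariety $\pi_H^* J(X_{(n,p)}/H)$ of $J(X_{(n,p)})$ coincides with the factor $\pi_{\overline{H}}^* J(X_T/\overline{H})$ appearing in the statement. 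Reindexing the displayed isogeny through the bijection therefore yields
\[J(X_{(n,p)}) \sim \bigoplus_{T \subset S} \bigoplus_{H \in \mathcal{H}_T^S(p)} \pi_H^* J(X_T/H).\]

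It remains to discard the degenerate terms. When $n-|T|=0$ the group $E_T$ is trivial and $\mathcal{H}_T^S(p) = \emptyset$. When $n-|T|=1$ the only index-$p$ subgroup of $E_T \cong \Z/p\Z$ is the trivial one, which does lie in $\mathcal{H}_T^S(p)$, but the corresponding quotient $X_T/H = X_T$ is a generalized Fermat curve of type $(1,p)$, of genus $0$ by Proposition~\ref{propdimension} (the formula gives $\tfrac{(1-1)(p-1)}{2} = 0$), so its jacobian is trivial and contributes nothing. Restricting to $n-|T| \geq 2$ thus leaves the isogeny unchanged and gives the asserted decomposition, with each factor of dimension $\tfrac{(n-|T|-1)(p-1)}{2}$ by Proposition~\ref{propdimension}. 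I expect the one point needing genuine care to be the verification that $H \mapsto (H \cap S,\, H/\langle H \cap S\rangle)$ and its inverse respect the condition $\overline{H} \cap S_T = \emptyset$, i.e.\ the correct bookkeeping of which generators survive in $E_T$; the geometric identifications and the dimension count are then routine consequences of the earlier results.
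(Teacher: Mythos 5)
Your proposal is correct and follows essentially the same route as the paper's proof: specialize Proposition~\ref{firstdecomposition} to the prime case, reindex the sum via $T = H \cap S$ together with the identification $X_{(n,p)}/H \cong X_T/(H/\langle T\rangle)$, and discard the $n-|T|=1$ terms using the genus formula of Proposition~\ref{propdimension}. The only difference is presentational: you spell out the bijection $H \leftrightarrow (T,\overline{H})$ and check both directions explicitly, whereas the paper leaves this bookkeeping implicit (invoking Lemma~\ref{lemmaklessthann} to bound $|H\cap S|$), so no further changes are needed.
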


\begin{proof}

Let $n\in\N$ such that $n\geq 2$. By \cref{firstdecomposition}, \[J(X_{(n,p)})\sim \bigoplus_{\substack{H\in\mathcal{H}(p) }}\pi_{H}^{*}(J(X_{(n,p)}/H)).\]
This can be split into summands of the form \[J_{k}:=\bigoplus_{\substack{H\in\mathcal{H}(p) \\ |H\cap S|=k}}\pi_{H}^{*}J(X_{(n,p)}/H),\] for $k\in\{0,\dots,n-1\}$, by \cref{lemmaklessthann}. Then, denoting by $T:=H\cap S$, we have $X_{(n,p)}/H\cong X_{T}/(H/\langle T\rangle)$ with $H/\langle T\rangle\leq E_{T}$ and $[E_{T}:H/\langle T\rangle]=p$. This implies that \[J_{k}=\bigoplus_{\substack{T\subset S,\, |T|=k \\ H\in\mathcal{H}_{T}^{S}(p)}}\pi_{H}^{*}J(X_{T}/H).\] Then, \[J(X_{(n,p)})\sim \bigoplus_{\substack{T\subset S\textrm{, }n-|T|\geq 1 \\ H\in \mathcal{H}_{T}^{S}(p)}}\pi_{H}^{*}J(X_{T}/H).\]
By \cref{propdimension}, the quotient $X_{T}/H$ is of genus $\frac{(n-|T|-1)(p-1)}{2}$ and, therefore, $\pi_{H}^{*}J(X_{T}/H)$ is of dimension $\frac{(n-|T|-1)(p-1)}{2}$. Notice that, when $n-|T|=1$, the subvariety $\pi_{H}^{*}J(X_{T}/H)$ is of dimension $0$. Thus, 
\[J(X_{(n,p)})\sim \bigoplus_{\substack{T\subset S\textrm{, }n-|T|\geq 2 \\ H\in \mathcal{H}_{T}^{S}(p)}}\pi_{H}^{*}J(X_{T}/H).\]
\end{proof}

\subsection{About Humbert-Edge curves}\label{section decomposition humbert-edge}

A generalized Fermat curve of type $(n,2)$ is a Humbert-Edge curve of type $n$ and, by \cite[Proposition 3.5]{FMZ18} and \cite[Lemma 4.2]{ALAR21}, its jacobian has an isogenous decomposition
\begin{equation}\label{decomhe}
J(X_{(n,2)})\sim A\oplus \pi_{H_{(n,2)}}^{*}J\left(X_{(n,2)}/H_{(n,2)}\right),
\end{equation}
where \[A=\sum_{i=0}^{n}\pi_{i}^{*}J\left(X_{(n,2)}/\left\langle\sigma_{i}\right\rangle\right),\]
and
\[H_{(n,2)}=\langle\sigma_{i}\sigma_{0}^{-1} \mid 1\leq i\leq n\rangle\leq E_{(n,2)}.\]
The morphisms $\pi_{i}:X_{(n,2)}\to\left( X_{(n,2)}/\left\langle\sigma_{i}\right\rangle\right)$ and $\pi_{H_{(n,2)}}:X_{(n,2)}\to \left(X_{(n,2)}/H_{(n,2)}\right)$ are the natural projections. By \cite[Lemma 3.1]{ALAR21} (see also \cite[Remark 3.6]{FMZ18}), the image $\pi_{H_{(n,p)}}^{*}J(X_{(n,2)}/H_{(n,2)})$ is trivial when $n$ is even and is not trivial when $n$ is odd. The decomposition in \cref{firstdecomposition} refines \eqref{decomhe} in the following sense: the first factor in \eqref{decomhe} satisfies
 \[A=\bigoplus_{\substack{H\leq E_{(n,2)}\\ [E_{(n,2)}:H]=2 \\ H\cap\{\sigma_{0},\dots,\sigma_{n}\}\neq\emptyset }}\pi_{H}^{*}J(X_{(n,2)}/H)\] 
 and the other one
  \[\pi_{H_{(n,2)}}^{*}J(X_{(n,2)}/H_{(n,2)})=\bigoplus_{\substack{H\leq E_{(n,2)}\\ [E_{(n,2)}:H]=2 \\ H\cap\{\sigma_{0},\dots,\sigma_{n}\}=\emptyset }}\pi_{H}^{*}J(X_{(n,2)}/H),\] because, in the case of a Humbert-Edge curve, there is at most one subgroup of index 2 such that $H\cap\{\sigma_{0},\dots,\sigma_{n}\}=\emptyset$, and it appears just in the case when $n$ is odd.

\section{On the Prym-Tyurin property}\label{sectionprymtyurin}

Let $p$ be a prime number. Recall that, when $p=2$, it was proved in \cite{ALAR21} that the jacobian variety of $X_{(n,2)}$ is a direct sum of Prym-Tyurin varieties. However, this property does not hold for generalized Fermat curves of type $(n,p)$ with $p\geq 5$. 

Let $X_{(n,p)}$ be a generalized Fermat curve of type $(n,p)$ and $E_{(n,p)}\leq \Aut(X_{(n,p)})$ its structural group, which is isomorphic to $(\Z/p\Z)^{n}$. Let $S:=\{\sigma_{0},\sigma_{1},\dots,\sigma_{n}\}\subset E_{(n,p)}$ be a set of generators, with $\sigma_{0}\cdots\sigma_{n}=1$ and each of them with $p$ fixed points. For $T\subset S$, we define $X_{T}:=X_{(n,p)}/\langle T\rangle$, $E_{T}:=E_{(n,p)}/\langle T\rangle$ and the set \[ \mathcal{H}_{T}^{S}(p):=\{H\leq E_{T} \mid [E_{T}:H]=p\textrm{ and }H\cap S_{T}=\emptyset\} \] as in \cref{sectiondecomposition}. Furthermore, we have the morphisms $\pi_{H}:X_{(n,p)}\to X_{T}/H$ given by the compositions of the natural projections $X_{(n,p)}\to X_{T}$ and $X_{T}\to X_{T}/H$.

\begin{theorem}\label{theoremnotprymtyurin}
    Let $X_{(n,p)}$ a generalized Fermat curve of type $(n,p)$, with $p$ a prime number, and $T\subset S$. If $p\geq 5$, then for $H\in\mathcal{H}_{T}^{S}(p)$ the abelian subvariety $\pi_{H}^{*}J(X_{T}/H)$ is not a Prym-Tyurin variety for $X_{(n,p)}$.
\end{theorem}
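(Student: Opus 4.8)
The plan is to reduce the statement to an intrinsic fact about the polarized abelian subvariety $P:=\pi_{H}^{*}J(X_{T}/H)\subseteq J(X_{(n,p)})$, and then to detect that the polarization induced on $P$ by the canonical principal polarization $\Theta$ of $J(X_{(n,p)})$ cannot be a multiple of a principal one. First I would identify $P$ group-theoretically. Writing $K\leq E_{(n,p)}$ for the preimage of $H$ under $E_{(n,p)}\to E_{T}$, we have $X_{T}/H\cong X_{(n,p)}/K$ with $[E_{(n,p)}:K]=p$ and $\sigma_{i}\notin K$ for every $\sigma_{i}\in S\smallsetminus T$, so by Lemma \ref{lemmainvariantpartandquotient} $P=(J(X_{(n,p)})^{K})^{0}$. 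The quotient $E_{(n,p)}/K\cong\Z/p$ acts on $P$, and by Corollary \ref{trivialchi} the trivial character does not occur in $T_{0}(P)$; hence a generator $\bar\sigma$ of $E_{(n,p)}/K$ satisfies $1+\bar\sigma+\cdots+\bar\sigma^{p-1}=0$ on $P$, giving an embedding $\Z[\zeta_{p}]\hookrightarrow\End(P)$. Since $\bar\sigma$ is induced by an automorphism of $X_{(n,p)}$ it is compatible with $\Theta$, so the Rosati involution restricts to complex conjugation $\zeta_{p}\mapsto\zeta_{p}^{-1}$ on $\Z[\zeta_{p}]$.

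Next I would use the characterization that $P$ is a Prym--Tyurin variety of exponent $e$ for $X_{(n,p)}$ precisely when the polarization $\iota_{P}^{*}\Theta$ induced on $P$ (with $\iota_{P}\colon P\hookrightarrow J(X_{(n,p)})$ the inclusion) is $e$ times a principal polarization, i.e.\ when all the elementary divisors of its type coincide. Thus it suffices to prove that $\iota_{P}^{*}\Theta$ has at least two distinct elementary divisors. To access the type I would decompose $T_{0}(P)=\bigoplus_{j=1}^{p-1}V_{\chi^{j}}$ over the nontrivial characters of $E_{(n,p)}/K$ and compute the multiplicities $a_{j}:=\dim V_{\chi^{j}}$ by the Chevalley--Weil formula applied to $X_{(n,p)}\to X_{(n,p)}/E_{(n,p)}\cong\P^{1}$, keeping track of the local rotation numbers at the $n+1$ branch points. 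These satisfy $a_{j}+a_{p-j}=n-|T|-1$ and sum to $\dim P=\frac{(n-|T|-1)(p-1)}{2}$. The induced polarization is then a $\Z[\zeta_{p}]$-Hermitian form on the lattice $H_{1}(P,\Z)$, and its elementary divisors are governed by the $a_{j}$ together with the ramification of $p$ in $\Z[\zeta_{p}]$ at the unique prime $\lambda=(1-\zeta_{p})$.

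The step where $p\geq 5$ enters is to show that the $a_{j}$ are not all equal. For $p\geq 5$ there are at least two conjugate pairs of characters, and the local contributions at the branch points make the multiplicities genuinely $j$-dependent; an equal type $(e,\dots,e)$ would force $a_{j}=\frac{n-|T|-1}{2}$ for all $j$, which fails. By contrast, for $p=2$ there is a single (real) nontrivial character and the type is automatically constant, recovering the Humbert--Edge computation of \cite{jacobianheauluro}; the case $p=3$, with a single conjugate pair, is exactly the borderline left aside in the statement.

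I expect the main obstacle to be the passage from the rational isotypic data $a_{j}$ to the integral elementary divisors of the restricted intersection form: one must control the lattice $H_{1}(P,\Z)$ and the kernel of $\pi_{H}^{*}$, equivalently compute both the exponent $e$ of $P$ and the degree of $\iota_{P}^{*}\Theta$ and verify the strict inequality $\deg\iota_{P}^{*}\Theta<e^{2\dim P}$, which is equivalent to the type being non-constant and hence to $P$ not being a Prym--Tyurin variety. This is the step that genuinely mixes the ramified cover $X_{(n,p)}\to X_{T}$ with the étale cover $X_{T}\to X_{T}/H$, and it is where the $\Z[\zeta_{p}]$-module structure and the branch data must be combined with the greatest care.
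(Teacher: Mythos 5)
There is a genuine gap, and it sits exactly where you predict it will: the bridge from the multiplicities $a_{j}$ to the elementary divisors of $\iota_{P}^{*}\Theta$. Your setup is fine --- the identification $P=(J(X_{(n,p)})^{K})^{0}$, the $\Z[\zeta_{p}]$-action with Rosati involution acting as complex conjugation, and the characterization of Prym--Tyurin subvarieties by constancy of the type of $\iota_{P}^{*}\Theta$ are all correct --- but the pivotal claim that ``an equal type $(e,\dots,e)$ would force $a_{j}=\frac{n-|T|-1}{2}$ for all $j$'' is false. The type is an invariant of the integral symplectic $\Z[\zeta_{p}]$-lattice $H_{1}(P,\Z)$, while the $a_{j}$ are invariants of the complex structure, and the two vary independently. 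A counterexample with precisely your structure: take the cyclic $p$-gonal curve $C:y^{p}=x^{a}(x-1)^{b}$ of genus $\frac{p-1}{2}$, with its order-$p$ automorphism $\sigma$. Then $J(C)$ carries a $\Z[\zeta_{p}]$-action with no trivial subrepresentation, its canonical polarization has constant type $(1,\dots,1)$ (so $J(C)\subseteq J(C)$ is a Prym--Tyurin variety of exponent $1$ for $C$), and yet $a_{j}+a_{p-j}=1$, so the $a_{j}\in\{0,1\}$ are as unbalanced as possible. Hence proving that the $a_{j}$ are not all equal --- which is all that the hypothesis $p\geq 5$ buys in your scheme --- says nothing about the type; and your closing paragraph concedes that the integral computation that would actually be needed is not carried out. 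So the proposal, as it stands, does not prove the theorem.

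The paper's proof detects non-constancy of the type through the kernel of an isogeny rather than through eigenspace multiplicities, and this is the ingredient your approach is missing. Write $\pi_{H}^{*}=\iota\circ f$ with $f:J(X_{T}/H)\to P$ an isogeny and $\iota$ the inclusion. Since $X_{(n,p)}\to X_{T}$ is a tower of ramified cyclic covers, its pullback on jacobians is injective by \cite[Proposition 11.4.3]{birkenhakelange}, while the cover $X_{T}\to X_{T}/H$ is \'etale with Galois group $H$, so a Hochschild--Serre argument (Lemma \ref{prophochschildserre} together with Proposition \ref{abelianquotient}) gives $\ker(f)\cong H$, of order $p^{\,n-|T|-1}$. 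If $P$ were Prym--Tyurin, then $\iota^{*}\Theta\equiv m\Psi$ with $\Psi$ principal; since $\pi_{H}^{*}\Theta$ is itself a multiple of the canonical principal polarization of $J(X_{T}/H)$ by \cite[Lemma 12.3.1]{birkenhakelange}, it follows that $f^{*}\Psi$ is a multiple of a principal polarization, and a degree count (Lemma \ref{multpp}) forces $|\ker(f)|\in\{p^{g},p^{2g}\}$ with $g=\frac{(n-|T|-1)(p-1)}{2}$. But $p^{\,n-|T|-1}<p^{g}$ exactly when $p\geq 5$, a contradiction. If you wish to salvage your lattice-theoretic plan, this computation of $\ker(\pi_{H}^{*})$ is the unavoidable input --- it is what pins down $H_{1}(P,\Z)$ --- and once you have it you are in effect re-running the paper's argument, so the extra Chevalley--Weil data gains nothing for this statement.
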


This result is a consequence of \cref{propositionPrymTyurinT'T} here below. In order to prove it, we need the following results.

\begin{lemma}\label{prophochschildserre}
Let $C$ be a smooth projective curve and $G\leq\Aut(C)$ a finite subgroup acting with no fixed points. If $\pi:C\to C/G$ is the natural projection, then there exists an injective morphism $\ker(\pi^{*})\to G^{\textrm{ab}}$, where $G^{\textrm{ab}}$ denotes the abelianization of $G$.
\end{lemma}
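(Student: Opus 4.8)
The plan is to realize $\ker(\pi^{*})$ as a first group-cohomology group and then to identify that group with a character group of $G^{\textrm{ab}}$. Since $G$ acts on $C$ without fixed points, the quotient map $\pi:C\to C/G$ is an unramified Galois covering, that is, a $G$-torsor in the étale topology. First I would invoke the Hochschild--Serre (Cartan--Leray) spectral sequence for this torsor with coefficients in the sheaf of units $\mathbb{G}_{m}$,
\[ E_{2}^{r,s}=H^{r}\bigl(G,H^{s}(C,\mathbb{G}_{m})\bigr)\Longrightarrow H^{r+s}(C/G,\mathbb{G}_{m}), \]
whose five-term exact sequence begins
\[ 0\to H^{1}\bigl(G,H^{0}(C,\mathbb{G}_{m})\bigr)\to \Pic(C/G)\xrightarrow{\ \pi^{*}\ }\Pic(C). \]

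Because $C$ is an irreducible projective curve, $H^{0}(C,\mathbb{G}_{m})=F^{*}$, and $G$ acts trivially on these constants; hence $H^{1}\bigl(G,H^{0}(C,\mathbb{G}_{m})\bigr)=H^{1}(G,F^{*})=\Hom(G,F^{*})$, the group of $F^{*}$-valued characters of $G$. Thus the five-term sequence identifies $\ker\bigl(\pi^{*}\colon \Pic(C/G)\to\Pic(C)\bigr)$ with $\Hom(G,F^{*})$. Next I would pass from $\Pic$ to the jacobian: the group $\Hom(G,F^{*})$ is finite, so every class in this kernel is torsion and therefore of degree $0$; consequently the kernel already lies in $\Pic^{0}(C/G)=J(C/G)$, and the restriction gives $\ker(\pi^{*}\colon J(C/G)\to J(C))\cong \Hom(G,F^{*})$, which in the cyclic case recovers Lemma \ref{lemmacyclic}.

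Finally, since characters factor through the abelianization, $\Hom(G,F^{*})=\Hom(G^{\textrm{ab}},F^{*})$; and for the algebraically closed field $F$ the character group $\Hom(G^{\textrm{ab}},F^{*})$ of the finite abelian group $G^{\textrm{ab}}$ embeds into $G^{\textrm{ab}}$, being isomorphic to the prime-to-$\operatorname{char}F$ part of $G^{\textrm{ab}}$. Composing these identifications yields the desired injection $\ker(\pi^{*})\to G^{\textrm{ab}}$.

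The step I expect to require the most care is the cohomological input: justifying the five-term exact sequence in the correct setting, namely that the spectral sequence applies to the étale $G$-torsor $C\to C/G$ and that $H^{0}(C,\mathbb{G}_{m})=F^{*}$ with trivial $G$-action. A secondary point to verify is that the kernel produced is exactly the degree-$0$ torsion, so that it is genuinely detected inside $J(C/G)$ rather than merely inside $\Pic(C/G)$; this is what lets the abstract identification with $\Hom(G^{\textrm{ab}},F^{*})$ be transported to the jacobian map appearing in the statement.
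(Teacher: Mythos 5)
Your proposal is correct and takes essentially the same route as the paper's own proof: both apply the Hochschild--Serre spectral sequence to the unramified covering $C\to C/G$ with $\mathbb{G}_{m}$ coefficients, read off the five-term exact sequence $0\to H^{1}(G,F^{*})\to \Pic(C/G)\xrightarrow{\pi^{*}}\Pic(C)$, identify $H^{1}(G,F^{*})=\Hom(G,F^{*})$, and restrict to the jacobian to obtain the injection into $G^{\textrm{ab}}$. Your version is in fact slightly more careful than the paper's on two minor points: it justifies why the kernel is detected inside $\Pic^{0}(C/G)$ (torsion classes have degree zero) and it notes that in positive characteristic $\Hom(G^{\textrm{ab}},F^{*})$ is only the prime-to-characteristic part of $G^{\textrm{ab}}$, so one gets an embedding rather than an isomorphism, which is exactly what the statement requires.
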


\begin{proof}
By the Hochschild-Serre spectral sequence \cite[Theorem III.2.20]{Mil80}, we deduce the exact sequence \[\xymatrix{ 0 \ar[r] & H^{1}(G,\C^{*}) \ar[r] & H^{1}(C/G,\C^{}) \ar[r] & H^{1}(C,\C^{})^{G} },\] which can be written as \[\xymatrix{ 0 \ar[r] & \textrm{Hom}(G,\C^{*}) \ar[r] & \textrm{Pic}(C/G) \ar[r] & \textrm{Pic}(C)^{G} },\] where the third arrow is the restriction morphism $\pi^{*}$. And since $J(C)\cong \Pic^{0}
(C)$, we see that $\ker(\pi^{*})\subset \Hom(G,\C^{*})\cong G^{\textrm{ab}}$.
\end{proof}

\begin{proposition}\label{abelianquotient}
Let $C$ be a smooth projective curve and $G\leq\Aut(C)$ a finite abelian subgroup acting with no fixed points. If $\pi:C\to C/G$ is the natural projection, then $\ker(\pi^{*})\cong G$.
\end{proposition}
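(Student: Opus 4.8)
The plan is to prove that $\ker(\pi^*) \cong G$ for a finite abelian group $G$ acting freely. I already have from Lemma~\ref{prophochschildserre} an injection $\ker(\pi^*) \hookrightarrow \Hom(G, \C^*) \cong G^{\mathrm{ab}}$, and since $G$ is abelian, $G^{\mathrm{ab}} = G$, so $\ker(\pi^*)$ embeds into $G$. This gives the inequality $|\ker(\pi^*)| \leq |G|$ for free. The remaining task is to establish the reverse inequality $|\ker(\pi^*)| \geq |G|$, after which the injection becomes an isomorphism by finiteness.

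To get the reverse inequality, the natural approach is to reduce to the cyclic case already handled in Lemma~\ref{lemmacyclic}. Since $G$ is a finite abelian group, write it as a product of cyclic subgroups, or better, proceed by building up $G$ through a chain of subgroups. First I would decompose the covering $\pi: C \to C/G$ into a tower of cyclic covers: choose a cyclic subgroup $G_1 \leq G$ and factor $\pi$ as $C \to C/G_1 \to C/G$. The induced map on jacobians factors as $\pi^* = \pi_1^* \circ \rho^*$, where $\pi_1: C \to C/G_1$ and $\rho: C/G_1 \to C/G$. By Lemma~\ref{lemmacyclic}, $\ker(\pi_1^*) \cong G_1$, since $G_1$ acts freely on $C$ (it is a subgroup of a freely acting group). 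The plan is then to argue inductively on $|G|$: the quotient group $G/G_1$ acts freely on $C/G_1$, so by the inductive hypothesis $\ker(\rho^*) \cong G/G_1$, giving $|\ker(\rho^*)| = |G/G_1|$.

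Combining these, I would analyze the composite kernel. From the factorization $\pi^* = \pi_1^* \circ \rho^*$ one obtains a short exact sequence relating $\ker(\pi^*)$, $\ker(\rho^*)$, and $\ker(\pi_1^*)$: concretely, there is an exact sequence
\[
0 \to \ker(\rho^*) \to \ker(\pi^*) \to \ker(\pi_1^*),
\]
where the last map is induced by $\rho^*$ restricted to $\ker(\pi^*)$. If one can show this composite kernel has order at least $|\ker(\rho^*)| \cdot |\ker(\pi_1^*)| = |G/G_1| \cdot |G_1| = |G|$, then combined with the upper bound $|\ker(\pi^*)| \leq |G|$ from Lemma~\ref{prophochschildserre}, equality holds and $\ker(\pi^*) \cong G$.

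The main obstacle I anticipate is controlling the order of the composite kernel precisely, that is, showing the sequence above is exact \emph{on the right} so that $|\ker(\pi^*)| = |\ker(\rho^*)| \cdot |\ker(\pi_1^*)|$ rather than merely $\leq$. The subtlety is that $\ker(\pi_1^*) \cong G_1$ need not embed into $\ker(\pi^*)$ a priori, since an element of $J(C/G_1)$ dying in $J(C)$ must first be lifted to $J(C/G)$. However, the upper bound from Lemma~\ref{prophochschildserre} sidesteps this entirely: it suffices to produce $|G|$ distinct elements in $\ker(\pi^*)$, and the lower-bound half of the exact sequence (the injection $\ker(\rho^*) \hookrightarrow \ker(\pi^*)$) together with a dimension or degree count should suffice. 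An alternative, perhaps cleaner route is to avoid induction and instead directly compute $\deg(\pi^*)$ as a map onto its image: since $\pi$ has degree $|G|$ and is étale, a push-pull argument ($\pi_* \pi^* = $ multiplication by $|G|$ on $J(C/G)$, using that $\pi$ is unramified) pins down $|\ker(\pi^*)|$ exactly, and this is the step I would expect to require the most care.
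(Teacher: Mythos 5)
Your skeleton is the same as the paper's: write $G$ as a product of cyclic groups, factor $\pi$ through the corresponding tower of free cyclic covers, apply Lemma~\ref{lemmacyclic} to each step, and use Lemma~\ref{prophochschildserre} to control the group structure. The paper does exactly this. The problem is that, as written, your proposal never proves the one inequality that matters, namely the lower bound $|\ker(\pi^{*})|\geq |G|$. Every tool you actually invoke yields an \emph{upper} bound: the injection of Lemma~\ref{prophochschildserre} gives $|\ker(\pi^{*})|\leq |G|$; your left-exact sequence $0\to\ker(\rho^{*})\to\ker(\pi^{*})\to\ker(\pi_{1}^{*})$ gives $|\ker(\pi^{*})|\leq |G/G_{1}|\,|G_{1}|$; and the push--pull identity $\pi_{*}\pi^{*}=|G|$ only shows that $\ker(\pi^{*})$ lies in the $|G|$-torsion of $J(C/G)$, a group of order $|G|^{2g}$, so it does not pin down the order at all (note also that $\pi_{*}\pi^{*}=|G|$ holds for ramified covers too, so it cannot detect the freeness hypothesis, which is essential: when there are fixed points $\pi^{*}$ is injective by \cite[Proposition 11.4.3]{birkenhakelange}). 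Your claim that the Hochschild--Serre bound ``sidesteps'' the right-exactness issue is backwards: with only upper bounds in hand, nothing excludes $\ker(\pi^{*})=\ker(\rho^{*})$, of order $|G/G_{1}|$; the injection $\ker(\rho^{*})\hookrightarrow\ker(\pi^{*})$ supplies only $|G/G_{1}|$ elements, and ``a dimension or degree count'' is not an argument.

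The missing step is precisely the surjectivity of $\rho^{*}\colon\ker(\pi^{*})\to\ker(\pi_{1}^{*})$, equivalently $\ker(\pi_{1}^{*})\subset \mathrm{im}(\rho^{*})$. This is true, and can be closed as follows: since $C\to C/G_{1}$ is free, an element of $\ker(\pi_{1}^{*})$ is the flat line bundle on $C/G_{1}$ attached to a character $\chi\in\Hom(G_{1},\C^{*})$; because $\C^{*}$ is divisible and $G$ is abelian, $\chi$ extends to $\tilde{\chi}\in\Hom(G,\C^{*})$, and the flat bundle on $C/G$ attached to $\tilde{\chi}$ pulls back under $\rho$ to the one attached to $\chi$. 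Hence every element of $\ker(\pi_{1}^{*})$ lifts, the sequence is exact on the right, and induction gives $|\ker(\pi^{*})|=|G|$, which combined with the injection of Lemma~\ref{prophochschildserre} finishes the proof. (Some such argument is also what underlies the equality $|\ker(\pi_{r}^{*}\circ\cdots\circ\pi_{1}^{*})|=|H_{r}|\cdots|H_{1}|$ asserted in the paper's proof; kernels are not multiplicative under composition in general.) Alternatively, and more cleanly, you can avoid the induction altogether: the Hochschild--Serre sequence used in Lemma~\ref{prophochschildserre} is exact, not merely injective at the first term, so for $G$ abelian it identifies $\ker(\pi^{*})$ with all of $\Hom(G,\C^{*})\cong G$ rather than with a subgroup of it, proving the proposition in one stroke.
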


\begin{proof}
Since $G$ is an abelian group, $G\cong H_{1}\times \cdots\times H_{r}$ with $H_{i}$ cyclic. Then, we have the following sequence \[\xymatrix{ C \ar[r]^(0.4){\pi_{1}}  & C/H_{1} \ar[r]^{\pi_{2}} & \cdots \ar[r]^(0.22){\pi_{r-1}} & C/(H_{1}\times \cdots\times H_{r-1}) \ar[r]^(0.7){\pi_{r}} & C/G},\] which induces the next sequence on the jacobians
\[\xymatrix{ J(C/G) \ar[r]^(0.3){\pi_{r}^{*}}  & J(C/(H_{1}\times \cdots\times H_{r-1})) \ar[r]^(0.7){\pi_{r-1}^{*}} & \cdots \ar[r]^(0.4){\pi_{2}^{*}} & J(C/H_{1}) \ar[r]^(0.5){\pi_{1}^{*}} & J(C)}.\] Then, by \cref{lemmacyclic}, $\ker(\pi_{i}^{*})\cong H_{i}$ for every $i\in\{1,\dots,n\}$. Therefore, given that $\pi^{*}=\pi_{r}^{*}\circ\cdots\circ\pi_{1}^{*}$, it follows that \[|\ker(\pi^{*})|=|\pi_{r}^{*}\circ\cdots\circ\pi_{1}^{*}|=|H_{r}|\cdots|H_{1}|=|G|.\]

Finally, by \cref{prophochschildserre}, there exists an injective morphism $\ker(\pi^{*})\to G$, hence we have $\ker(\pi^{*})\cong G$.
\end{proof}

\begin{lemma}\label{multpp}
Let $A$ and $B$ be two abelian varieties of dimension $g$, $f:A\to B$ an isogeny of prime exponent $p$ and $\Theta$ a principal polarization of $B$. If $f^{*}\Theta$ is a multiple of a principal polarization of $A$ then $|\ker(f)|\in\{p^{g},p^{2g}\}$.
\end{lemma}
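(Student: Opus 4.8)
The plan is to translate every statement about line bundles into the language of the polarization homomorphisms $\phi_{L}\colon A\to \hat{A}$ (with $\hat{A}$ the dual abelian variety) and then to read off $|\ker(f)|$ from a degree computation sharpened by an exponent bound. The two identities I would rely on are the pullback formula $\phi_{f^{*}\Theta}=\hat{f}\circ\phi_{\Theta}\circ f$, where $\hat{f}\colon\hat{B}\to\hat{A}$ is the dual isogeny, and $\phi_{L^{\otimes m}}=[m]\circ\phi_{L}$, where $[m]$ is multiplication by $m$. Since $\Theta$ is principal, $\phi_{\Theta}$ is an isomorphism; writing the hypothesis as $f^{*}\Theta=m\,\Xi$ for a principal polarization $\Xi$ of $A$ and an integer $m\geq 1$, the map $\phi_{\Xi}$ is an isomorphism as well.

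First I would compute $\deg(\phi_{f^{*}\Theta})$ in two ways. From $\phi_{f^{*}\Theta}=\hat{f}\circ\phi_{\Theta}\circ f$ and $\deg(\hat{f})=\deg(f)$ one obtains $\deg(\phi_{f^{*}\Theta})=(\deg f)^{2}$; from $\phi_{f^{*}\Theta}=[m]\circ\phi_{\Xi}$ together with $\deg([m])=m^{2g}$ one obtains $\deg(\phi_{f^{*}\Theta})=m^{2g}$. Hence $\deg(f)=m^{g}$. Because $f$ has exponent $p$, its kernel is killed by $p$, so $|\ker(f)|=\deg(f)=p^{r}$ for some $r\geq 1$; the equality $p^{r}=m^{g}$ then forces $m=p^{s}$ with $r=sg$. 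It remains to show $s\in\{1,2\}$.

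The crux is to bound the exponent of the group $\ker(\phi_{f^{*}\Theta})$, again computed in two ways. On one hand, from $\phi_{f^{*}\Theta}=[m]\circ\phi_{\Xi}$ with $\phi_{\Xi}$ an isomorphism, $\ker(\phi_{f^{*}\Theta})\cong A[m]\cong(\Z/m\Z)^{2g}$, which has exponent exactly $m=p^{s}$. On the other hand, from $\phi_{f^{*}\Theta}=\hat{f}\circ\phi_{\Theta}\circ f$ with $\phi_{\Theta}$ an isomorphism, there is a short exact sequence $0\to\ker(f)\to\ker(\phi_{f^{*}\Theta})\to\ker(\hat{f})\to 0$. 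Here $\ker(f)$ has exponent $p$ by hypothesis, and $\ker(\hat{f})$ is the dual group $\Hom(\ker(f),\C^{*})$ of $\ker(f)$, so it too has exponent $p$. An extension of an exponent-$p$ group by an exponent-$p$ group has exponent dividing $p^{2}$, whence $p^{s}\mid p^{2}$ and $s\leq 2$. Combined with $s\geq 1$ this yields $r=sg\in\{g,2g\}$, i.e. $|\ker(f)|\in\{p^{g},p^{2g}\}$.

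I expect the main obstacle to be the exponent argument rather than the degree bookkeeping. One must correctly identify $\ker(\phi_{f^{*}\Theta})$ with the preimage $g^{-1}(\ker(\hat f))$ of the second description, verify exactness of the displayed sequence, and justify that $\ker(\hat{f})$ is the dual of $\ker(f)$ and hence shares its exponent. These are standard facts about isogenies of complex abelian varieties, but they are precisely the inputs that upgrade the numerical relation $r=sg$ into the sharp dichotomy $s\in\{1,2\}$; the degree computations themselves need only multiplicativity of degrees and $\deg([m])=m^{2g}$.
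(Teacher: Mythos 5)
Your proof is correct, but it reaches the bound by a different mechanism than the paper. Both arguments pivot on the same numerical identity $\deg(f)=m^{g}$: you derive it from $\phi_{f^{*}\Theta}=\hat{f}\circ\phi_{\Theta}\circ f$, $\deg(\hat{f})=\deg(f)$ and $\deg([m])=m^{2g}$, while the paper gets the identical statement by citing the Euler characteristic corollaries of Birkenhake--Lange (Corollaries 3.6.2 and 3.6.6), so that part is the same content in different clothing. The genuine divergence is in how the prime-exponent hypothesis produces the upper bound: the paper uses the complementary isogeny, namely since $\ker(f)\subseteq A[p]$ there exists $\tilde{f}:B\to A$ with $f\circ\tilde{f}=[p]$, giving $\deg(f)\deg(\tilde{f})=p^{2g}$ and hence $m^{g}\mid p^{2g}$ in one line; you instead bound the exponent of $\ker(\phi_{f^{*}\Theta})$ by $p^{2}$ via the exact sequence $0\to\ker(f)\to\ker(\phi_{f^{*}\Theta})\to\ker(\hat{f})\to 0$ together with the duality $\ker(\hat{f})\cong\Hom(\ker(f),\C^{*})$, and compare with $\ker(\phi_{f^{*}\Theta})\cong(\Z/m\Z)^{2g}$. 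Both routes are sound and both use the exponent hypothesis essentially; the paper's is more economical (one standard fact about isogenies whose kernel is killed by $p$), whereas yours stays entirely inside the formalism of polarization homomorphisms and makes explicit where duality of isogenies enters. A small point in your favor: you explicitly invoke $r\geq 1$ (nontriviality of the kernel, because the exponent is exactly the prime $p$) to rule out $m=1$, a step the paper's ``as $p$ is a prime number, the result holds'' leaves implicit.
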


\begin{proof}
Let $f:A\to B$ be an isogeny and suppose that $f^{*}\Theta=k\Psi$, where $\Psi$ is a principal polarization of $A$ and $k\in\Z$. On the one hand, there exists an isogeny $\tilde{f}:B\to A$ such that $f\circ \tilde{f} = p^{}$ and $\deg(f)\deg(\tilde{f})=p^{2g}$. On the other hand, by \cite[Corollary 3.6.2]{BL04} and \cite[Corollary 3.6.6]{BL04}, it follows that $\deg(f)=k^{g}$. This implies that $k^{g}\deg(\tilde{f})=p^{2g}$ and, as $p$ is a prime number, the result holds.
\end{proof}

\begin{proposition}\label{propositionprymtyurin}
Let $C$ be a smooth projective curve, $G\leq \Aut(C)$ an abelian group of prime exponent $p$ acting with no fixed points and $\pi:C\to C/G$ the natural projection. If $g$ is the genus of $C/G$ and $|G|<p^{g}$, then $\pi_{}^{*}J(C/G)$ is not a Prym-Tyurin variety for $C$.
\end{proposition}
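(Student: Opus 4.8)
The plan is to argue by contradiction and reduce everything to Lemma \ref{multpp}. Write $A:=\pi^{*}J(C/G)\subseteq J(C)$, let $\iota:A\hookrightarrow J(C)$ be the inclusion, and let $\Theta$ denote the canonical principal polarization of $J(C)$. I would factor the induced map as $J(C/G)\xrightarrow{\alpha}A\hookrightarrow J(C)$, where $\alpha$ is the corestriction of $\pi^{*}$ onto its image. By Proposition \ref{abelianquotient} we have $\ker(\pi^{*})\cong G$, so $\alpha$ is an isogeny with $\ker(\alpha)\cong G$; in particular $|\ker(\alpha)|=|G|$ and, since $G$ has exponent $p$, $\alpha$ is an isogeny of prime exponent $p$. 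Moreover $\alpha$ is an isogeny between abelian varieties of the same dimension $g=\dim J(C/G)$, so the dimension hypothesis of Lemma \ref{multpp} is met.

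The first substantive step is an unconditional computation of the pullback of $\Theta$ along $\pi^{*}$. Denoting by $\Theta_{C/G}$ the canonical principal polarization of $J(C/G)$, I would use the adjointness of $\pi^{*}$ and the norm map $\mathrm{Nm}:J(C)\to J(C/G)$ with respect to the canonical polarizations, together with the identity $\mathrm{Nm}\circ\pi^{*}=[\deg\pi]$ (see \cite{birkenhakelange}), to obtain that the pullback of $\Theta$ under $\pi^{*}$ equals $\deg(\pi)\,\Theta_{C/G}=|G|\,\Theta_{C/G}$. Since $\pi^{*}=\iota\circ\alpha$, this reads $\alpha^{*}(\iota^{*}\Theta)=|G|\,\Theta_{C/G}$, an identity that holds regardless of any Prym--Tyurin assumption.

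Then I would bring in the assumption: if $A$ were a Prym--Tyurin variety for $C$ of some exponent $e$, then by definition $\iota^{*}\Theta=e\,\Xi$ for a principal polarization $\Xi$ of $A$. Substituting gives $e\,\alpha^{*}\Xi=|G|\,\Theta_{C/G}$ in the torsion-free group $\mathrm{NS}(J(C/G))$. Because a principal polarization is primitive in the Néron--Severi group (the map $D\mapsto\phi_{D}$ is injective and satisfies $\phi_{mD}=m\phi_{D}$, so a class whose associated homomorphism is an isomorphism cannot be a proper integer multiple of another ample class), this equation forces $e\mid|G|$ and hence $\alpha^{*}\Xi=\tfrac{|G|}{e}\,\Theta_{C/G}$. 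Thus $\alpha^{*}\Xi$ is an honest integer multiple of the principal polarization $\Theta_{C/G}$. At this point $\alpha:J(C/G)\to A$ is an isogeny of prime exponent $p$ between $g$-dimensional abelian varieties, $\Xi$ is a principal polarization of its target, and $\alpha^{*}\Xi$ is a multiple of a principal polarization, so Lemma \ref{multpp} applies and yields $|\ker(\alpha)|=|G|\in\{p^{g},p^{2g}\}$, contradicting the hypothesis $|G|<p^{g}$. Hence $A=\pi^{*}J(C/G)$ cannot be a Prym--Tyurin variety for $C$.

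The main obstacle I anticipate is twofold and concentrated in the middle of the argument: first, pinning down the pullback-of-polarizations identity $\alpha^{*}(\iota^{*}\Theta)=|G|\,\Theta_{C/G}$ precisely via the norm/adjointness formalism; and second, the primitivity step that upgrades the relation $e\,\alpha^{*}\Xi=|G|\,\Theta_{C/G}$ to a genuine \emph{integral} multiple, which is exactly what Lemma \ref{multpp} requires (it needs $\alpha^{*}\Xi$ itself, not merely some multiple of it, to be a multiple of a principal polarization). Everything else is bookkeeping with the factorization $\pi^{*}=\iota\circ\alpha$ and the order of $\ker(\alpha)$ furnished by Proposition \ref{abelianquotient}.
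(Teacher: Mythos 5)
Your proof is correct and follows essentially the same route as the paper's: factor $\pi^{*}=\iota\circ\alpha$, pull back the canonical principal polarization (the paper invokes \cite[Lemma 12.3.1]{birkenhakelange} to write $\pi^{*}\Theta\equiv k\Xi$, which is exactly the fact you re-derive via the norm map, with $k=\deg\pi=|G|$), and combine Proposition \ref{abelianquotient} with Lemma \ref{multpp} to reach the contradiction $|G|\in\{p^{g},p^{2g}\}$. The only difference is that you spell out two details the paper glosses over, namely the identification of the multiplier as $|G|$ and the primitivity argument giving $e\mid|G|$, so that $\alpha^{*}\Xi$ is an honest \emph{integer} multiple of a principal polarization as Lemma \ref{multpp} requires.
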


\begin{proof}
  The morphism $\pi_{}^{*}:J(C/G)\to J(C)$ can be factorized as \[\xymatrix{ \pi_{}^{*}J(C/G) \ar[r]^{\iota} & J(C) \\ & J(C/G) \ar[lu]^{f} \ar[u]_{\pi_{}^{*}} },\] where $\iota$ is the inclusion of the image of $J(C/G)$ by $\pi^{*}$ and $f$ is an isogeny.
 
 Let us suppose that $\pi_{}^{*}J(C/G)$ is a Prym-Tyurin variety for $C$, then there exist principal polarizations $\Theta$ and $\Psi$ of $J(C)$ and $\pi^{*}J(C/G)$, respectively, such that $\iota^{*}\Theta\equiv m\Psi$. On the one hand, $\pi_{}^{*}\Theta\equiv k\Xi$ with $\Xi$ a principal polarization of $J(C/G)$ by \cref{proposition lemma 12.3.1 birkenhakelange}. On the other hand, $(f\circ \iota)^{*}\Theta\equiv f^{*}\circ\iota^{*}\Theta\equiv f^{*}(m\Psi)\equiv mf^{*}\Psi$. Then \[k\Xi\equiv mf^{*}\Psi,\] and therefore $f^{*}\Psi$ is a multiple of a principal polarization. Then, by \cref{multpp}, it follows that $|\ker(f)|\in\{p^{g},p^{2g}\}$. However, by \cref{abelianquotient}, $|\ker(f)|= |G|$ and, therefore, $|G|\in\{p^{g},p^{2g}\}$, which is a contradiction. Finally, $\pi^{*}J(C/G)$ is not a Prym-Tyurin variety for $C$.
 \end{proof}

A direct consequence of this proposition is

\begin{corollary}\label{corollarynotprymtyurin}
    Assume that $p\geq 5$. For any $T\subset S$ and $H\in\mathcal{H}_{T}^{S}(p)$, the image of $J(X_{T}/H)$ in $J(X_{T})$ under the natural projection $X_{T}\to X_{T}/H$ is not a Prym-Tyurin variety for $X_{T}$.
\end{corollary}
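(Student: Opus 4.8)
The plan is to apply Proposition \ref{propositionprymtyurin} directly with $C = X_T$ and $G = H$, so that the entire argument reduces to checking that the three hypotheses of that proposition hold for this choice. Fix $T \subset S$ with $n - |T| \ge 2$ and $H \in \mathcal{H}_T^S(p)$; the degenerate case $n - |T| = 1$ I would dispose of at the end, since there $X_T/H$ has genus $0$ by Proposition \ref{propdimension}, so $J(X_T/H)$ is trivial and its image is the zero variety, which is not a Prym-Tyurin variety.

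First I would verify that $H$ is abelian of prime exponent $p$. This is immediate: since $E_T \cong (\Z/p\Z)^{n-|T|}$, every subgroup is again an elementary abelian $p$-group, and as $[E_T:H]=p$ we have $|H| = p^{n-|T|-1}$; in particular $H$ is nontrivial (using $n - |T| \ge 2$) and of exponent $p$.

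The key step, and the one I expect to be the main obstacle, is showing that $H$ acts on $X_T$ with no fixed points, which is precisely where the defining condition $H \cap S_T = \emptyset$ of $\mathcal{H}_T^S(p)$ enters. Here I would invoke the fixed-point structure of a generalized Fermat curve: the only elements of the structural group $E_T$ fixing a point of $X_T$ are the nonidentity elements of $\langle s\rangle$ for $s \in S_T$. Each such $\langle s\rangle$ has prime order $p$, so for any $s \in S_T$ the intersection $H \cap \langle s\rangle$ is either trivial or equal to $\langle s\rangle$; the latter would force $s \in H \cap S_T$, contradicting $H \in \mathcal{H}_T^S(p)$. Hence $H \cap \langle s\rangle = \{1\}$ for every $s \in S_T$, so $H$ contains no element with a fixed point and acts freely. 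Equivalently, this is exactly the statement that the cover $X_T \to X_T/H$ is étale, already used before Proposition \ref{propdimension}.

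Finally I would check the numerical hypothesis $|H| < p^g$, where $g$ is the genus of $X_T/H$. By Proposition \ref{propdimension}, $g = \frac{(n-|T|-1)(p-1)}{2}$, while $|H| = p^{n-|T|-1}$. Writing $m = n-|T|$, the required inequality $p^{m-1} < p^{(m-1)(p-1)/2}$ is equivalent to $m-1 < \frac{(m-1)(p-1)}{2}$. For $p \ge 5$ we have $\frac{p-1}{2} \ge 2$, so $\frac{(m-1)(p-1)}{2} \ge 2(m-1) > m-1$ whenever $m \ge 2$, which is our standing assumption. With all three hypotheses verified, Proposition \ref{propositionprymtyurin} yields that $\pi^{*}J(X_T/H)$ is not a Prym-Tyurin variety for $X_T$, as claimed.
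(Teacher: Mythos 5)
Your proposal is correct and follows essentially the same route as the paper: the paper's proof likewise combines Proposition \ref{propdimension} (to get $g=\frac{(n-|T|-1)(p-1)}{2}$) with the inequality $|H|=p^{n-|T|-1}<p^{g}$ for $p\geq 5$ and then applies Proposition \ref{propositionprymtyurin}. The only difference is that you explicitly verify the remaining hypotheses of that proposition (that $H$ has exponent $p$ and acts freely on $X_T$, via $H\cap S_T=\emptyset$ and the prime-order argument) and treat the degenerate case $n-|T|=1$, whereas the paper takes the étale/freeness condition for granted, having asserted it just before Proposition \ref{propdimension}.
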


\begin{proof}
    Let $T\subset S$ and $H\in \mathcal{H}_{T}^{S}(p)$. By \cref{propdimension}, $X_{T}/H$ is of genus $\frac{(n-|T|-1)(p-1)}{2}$. Given that $p\geq 5$ we have \[|H|=p^{(n-|T|-1)}<p^{\frac{(n-|T|-1)(p-1)}{2}},\]  and by \cref{propositionprymtyurin} it follows that $\pi_{H}^{*}J(X_{T}/H)$ is not a Prym-Tyurin variety for $X_{T}$.
\end{proof}

Actually, this result is even more general.

\begin{proposition}\label{propositionPrymTyurinT'T}
    Assume that $p\geq 5$. For any $T'\subset T\subset S$ and $H\in\mathcal{H}_{T}^{S}(p)$, the image of $J(X_{T}/H)$ in $J(X_{T'})$ under the natural projection is not a Prym-Tyurin variety for $X_{T'}$.
\end{proposition}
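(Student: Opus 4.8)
The plan is to reduce the statement to the machinery already developed for Proposition \ref{propositionprymtyurin}, the only new ingredient being a kernel computation for the (now ramified) covering $X_{T'}\to X_T/H$. Write $E_{T'}=E_{(n,p)}/\langle T'\rangle$. Since $\langle T'\rangle\subseteq\langle T\rangle$ there is a natural projection $\alpha\colon X_{T'}\to X_T$, and composing with $\beta\colon X_T\to X_T/H$ gives the natural projection $\rho:=\beta\circ\alpha\colon X_{T'}\to X_T/H$, whose induced map on Jacobians $\rho^{*}=\alpha^{*}\circ\beta^{*}$ has image the subvariety under scrutiny. Letting $\widetilde H\leq E_{T'}$ be the preimage of $H$ under the projection $E_{T'}\twoheadrightarrow E_T$, one checks that $X_T/H=X_{T'}/\widetilde H$, that $\widetilde H$ is abelian of exponent $p$ with $[E_{T'}:\widetilde H]=p$, and that $\widetilde H$ contains $G':=\langle T\rangle/\langle T'\rangle$ with $\widetilde H/G'\cong H$; thus $\rho$ is exactly the quotient of $X_{T'}$ by $\widetilde H$.

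The key step is to prove $\ker(\rho^{*})\cong H$, so that $|\ker\rho^{*}|=p^{\,n-|T|-1}$ with exponent $p$. I would argue through the factorization $\rho^{*}=\alpha^{*}\circ\beta^{*}$. The map $\beta$ is étale, since $H\cap S_T=\emptyset$, so Proposition \ref{abelianquotient} gives $\ker\beta^{*}\cong H$. For $\alpha$, its deck group is $G'$, generated by the images $\overline{\sigma_i}$ of those $\sigma_i\in T\smallsetminus T'$, each of which fixes a point of degree $p$ on $X_{T'}$; hence the inertia subgroups $\langle\overline{\sigma_i}\rangle$ generate $G'$. Running the Hochschild–Serre argument of Lemma \ref{prophochschildserre}, refined to account for ramification, $\ker\alpha^{*}$ injects into the group of characters of $G'$ that are trivial on all inertia subgroups; since these generate $G'$, that group is trivial, and $\alpha^{*}$ is injective. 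Therefore $\ker\rho^{*}=\ker\beta^{*}\cong H$.

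Finally I would rerun the proof of Proposition \ref{propositionprymtyurin} with $C=X_{T'}$ and the covering $\rho$. Factoring $\rho^{*}=\iota\circ f$ through the inclusion $\iota$ of $\rho^{*}J(X_T/H)$, the corestriction $f\colon J(X_T/H)\to\rho^{*}J(X_T/H)$ is an isogeny with $\ker f=\ker\rho^{*}\cong H$, of prime exponent $p$. If $\rho^{*}J(X_T/H)$ were a Prym–Tyurin variety for $X_{T'}$, then, using \cite[Lemma 12.3.1]{birkenhakelange} applied to the covering $\rho$ exactly as there, $f^{*}\Psi$ would be a multiple of a principal polarization, so Lemma \ref{multpp} forces $|\ker f|\in\{p^{g},p^{2g}\}$, where $g=\frac{(n-|T|-1)(p-1)}{2}$ is the genus of $X_T/H$ by Proposition \ref{propdimension}. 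But $|\ker f|=p^{\,n-|T|-1}$, and for $p\geq 5$ we have $p^{\,n-|T|-1}<p^{g}$ exactly as in Corollary \ref{corollarynotprymtyurin}, so $|\ker f|\notin\{p^{g},p^{2g}\}$; this contradiction proves the claim (the degenerate case $n-|T|\leq 1$, where $X_T/H$ has genus $0$, being excluded).

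The main obstacle is the computation $\ker\rho^{*}\cong H$: in contrast with Corollary \ref{corollarynotprymtyurin}, the group $\widetilde H$ acts on $X_{T'}$ with fixed points, so Proposition \ref{abelianquotient} cannot be invoked directly. The remedy is to separate the ramified factor $\alpha$ from the étale factor $\beta$ and to show that $\alpha^{*}$ is injective because the inertia subgroups of $G'$ generate $G'$, so that only the étale factor contributes to $\ker\rho^{*}$.
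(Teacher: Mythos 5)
Your proposal follows the same overall strategy as the paper's proof: factor the pullback $\rho^{*}=\pi_{H}^{*}$ through its image, show that the resulting isogeny $f$ onto $\pi_{H}^{*}J(X_{T}/H)$ has kernel isomorphic to $H$ (hence of order $p^{\,n-|T|-1}$ and exponent $p$), and then contradict the Prym--Tyurin hypothesis by combining \cite[Lemma 12.3.1]{birkenhakelange}, Lemma \ref{multpp} and the genus count of Proposition \ref{propdimension}; all of this is exactly how the paper argues, and your version of these steps is correct. The one place where you genuinely diverge is the justification that $\alpha^{*}\colon J(X_{T})\to J(X_{T'})$ is injective, which is what reduces $\ker\rho^{*}$ to $\ker\beta^{*}\cong H$. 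The paper proves this by factoring $\alpha$ into a chain of cyclic quotients by the classes of the $\sigma_{i}\in T\smallsetminus T'$, each of which (and each of whose nontrivial powers) fixes points, and applying \cite[Proposition 11.4.3]{birkenhakelange} to each cyclic ramified cover. You instead appeal to ``the Hochschild--Serre argument of Lemma \ref{prophochschildserre}, refined to account for ramification.'' That is the soft spot: the spectral sequence cited in Lemma \ref{prophochschildserre} (Milne III.2.20) applies to \emph{étale} Galois coverings, and $\alpha$ is ramified, so the ``refinement'' is precisely the statement requiring proof --- namely that for a ramified Galois cover $C\to C/G$ the kernel of the pullback embeds into the group of characters of $G$ trivial on all stabilizer subgroups. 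This fact is standard and true (one can prove it by a $G$-linearization/descent argument: a line bundle killed by the pullback inherits a linearization differing from the trivial one by a character of $G$, and this character must be trivial on the stabilizer of any fixed point), so your argument is morally sound, but as written it is asserted rather than proved. The gap closes immediately either by supplying that argument or by substituting the paper's route via \cite[Proposition 11.4.3]{birkenhakelange}; with that repair, the rest of your proof (the identification $X_{T}/H=X_{T'}/\widetilde{H}$, the computation $\ker\beta^{*}\cong H$ from Proposition \ref{abelianquotient}, and the inequality $p^{\,n-|T|-1}<p^{g}$ for $p\geq 5$) coincides with the paper's reasoning.
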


Note that \cref{theoremnotprymtyurin} is a direct consequence of \cref{propositionPrymTyurinT'T} for any $T\subset S$ and $T'=\emptyset$.

\begin{proof}
Let $T'\subset T\subset S$ and $\pi_{H}:X_{T'}\to X_{T}/H$ the morphism given by the composition of the natural projections $X_{T'}\to X_{T}$ and $p_{H}:X_{T}\to X_{T}/H$. This morphism induces a morphism $\pi_{H}^{*}:J(X_{T}/H)\to J(X_{T'})$, which can be factorized as
\begin{equation}\label{factorizationph}
\xymatrix{J(X_{T}/H) \ar[r]^{f} & \pi_{H}^{*}J(X_{T}/H) \ar[r]^{\iota} & J(X_{T'})},\end{equation}
where $f$ is an isogeny and $\iota$ is an embedding. Let us suppose $\pi_{H}^{*}J(X_{T}/H)$ is a Prym-Tyurin variety for $X_{T'}$, so for a principal polarization  $\Theta$ of $J(X_{T'})$ we have $\iota^{*}\Theta\equiv m\Psi$, with $\Psi$ a principal polarization of $\pi_{H}^{*}J(X_{T}/H)$ and $m\in \N$. By \cref{proposition lemma 12.3.1 birkenhakelange}, $\pi_{H}^{*}\Theta\equiv l\Xi$, with $\Xi$ a principal polarization of $J(X_{T}/H)$ and $l\in \N$. Thus, \[ l\Xi\equiv \pi_{H}^{*}\Theta\equiv f^{*}\circ \iota^{*}\Theta\equiv f^{*}(m\Psi)\equiv mf^{*}\Psi.\] This implies that $f^{*}\Psi$ is a multiple of a principal polarization. Notice that \eqref{factorizationph} can be also written as 
\[\xymatrix{J(X_{T}/H) \ar[r]^{f_{T}} & p_{H}^{*}J(X_{T}/H) \ar[r]^{\iota_{1}} & J(X_{T}) \ar[r]^{\iota_{2}} &J(X_{T'})},\]
where $\iota_{1}$ and $\iota_{2}$ are injective. The injectivity of $\iota_{2}:J(X_{T})\to J(X_{T'})$ comes from the fact that $\iota_{2}$ is a composition of injective morphisms. Indeed, let $\{\sigma_{i_{1}},\dots,\sigma_{i_{r}}\}=T\smallsetminus T'$. The projections \[p_{r}:X_{T'}\to X_{T'}/\langle\sigma_{i_{r}}\rangle\quad\text{and}\quad p_{j}:X_{T'}/\langle \sigma_{i_{j+1}},\dots,\sigma_{i_{r}}\rangle\to X_{T'}/\langle\sigma_{i_{j}},\dots,\sigma_{i_{r}}\rangle,\text{ for }j<r,\] 
induce morphisms $p_{j}^{*}$ between their jacobians such that $\iota_{2}=p_{r}^{*}\circ\cdots\circ p_{1}^{*}$ and all the $p_{j}^{*}$ are injective by \cite[Proposition 11.4.3]{BL04} because all the elements in $S$, and their powers, fix points. Then, $\iota_{1}\circ\iota_{2}$ is injective and $|\ker(f)|=|\ker(f_{T})|$. However, $|\ker(f_{T})|=|H|=p^{(n-|T|-1)}$, by \cref{abelianquotient}, so $|\ker(f)|=p^{(n-|T|-1)}$ and \[|\ker(f)|=p^{(n-|T|-1)}<p^{\frac{(n-|T|-1)(p-1)}{2}}=p^{g},\] with $g$ the genus of $X_{T}/H$ by \cref{propdimension}. Finally, by \cref{multpp}, a contradiction arises, thus the image of $J(X_{T}/H)$ is not a Prym-Tyurin variety for $X_{T'}$.
\end{proof}

\section{On the Ekedahl-Serre questions}\label{section p=3 ekedahl-serre question}

\subsection{On the number of factors of each dimension for small $n$}

In the following, we compute how many factors having the same dimension appear in the decomposition of the jacobian of some generalized Fermat curves. 

Notice that, by definition, $\left| \mathcal{H}_{T}^{S}(p) \right|=\left| \mathcal{H}_{T'}^{S'}(p) \right|$ if $|S|-|T|=|S'|-|T'|$. In particular, if $T$ and $T’$ are subsets of $S$ having the same cardinality, then $\left| \mathcal{H}_{T}^{S}(p) \right|=\left| \mathcal{H}_{T'}^{S}(p) \right|$. Therefore, we write  $\left| \mathcal{H}_{|T|}^{S}(p) \right|$ when we do not need to specify the subset we are considering.

In order to avoid confusion, we denote by $S_{n}$ the set $S$ when the generalized Fermat curve is of type $(n,p)$.

By \cref{prooffirstpart}, for a generalized Fermat curve of type $(n,p)$, we have the following relation: 
\begin{equation}\label{equation number factor}
\sum_{|T|=0}^{n-2}{n+1\choose |T|}\left| \mathcal{H}_{|T|}^{S_{n}}(p) \right|\left( \dfrac{(n-|T|-1)(p-1)}{2} \right)=g_{(n,p)},
\end{equation}
where ${n+1\choose |T|}\left| \mathcal{H}_{|T|}^{S_{n}}(p) \right|$ is the number of factors of dimension $\frac{(n-|T|-1)(p-1)}{2}$ appearing in the decomposition. Thus, in order to get the number of factors appearing in the decomposition we need to compute the cardinality of the sets $\mathcal{H}_{|T|}^{S_{n}}(p)$, which have the same cardinality as the set $\mathcal{H}_{0}^{S_{n-|T|}}(p)$.

\begin{proposition}
Let $n\in\N$. If $n\geq 2$, then $\left| \mathcal{H}_{0}^{S_{n}}(p) \right|=\dfrac{(p-1)^{n}-(-1)^{n}}{p}$.
\end{proposition}

\begin{proof}
By definition 
\[\mathcal{H}_{0}^{S_{n}}(p)=\{H\leq E \mid [E:H]=p \textrm{ and }H\cap S=\emptyset\}.\]
Notice that, given that $E$ is a $\mathbb{F}_{p}$-vector space of dimension $n$, and the elements $\sigma_{1},\dots,\sigma_{n}$ form a basis for $E$, the cardinality $h_{n}$ of $\mathcal{H}_{0}^{S_{n}}(p)$ is equal to the number of hyperplanes in $E$ that do not contain any of the lines $\mathbb{F}_{p}\sigma_{i}$, with $i\in\{0,\dots,n\}$. Dualizing the problem, we need to determine the number of lines that are not orthogonal to any $\mathbb{F}_{p}\sigma_{i}$, with $i\in\{0,\dots,n\}$. Denote by $H_{\sigma_{i}}=\{f\in E^{*} \mid f(\sigma_{i})=0\}$. Thus, 
\[h_{n}=\frac{1}{p-1}\left(|E^{*}|-\left| \bigcup_{i=0}^{n} H_{\sigma_{i}} \right|\right).\] 
Notice that, for every proper subset $T$ of $S$,  the elements of $T$ are linearly independent. This implies that 
\[\left| \bigcap_{\sigma_{i}\in T} H_{\sigma_{i}}\right|=p^{n-|T|},\]
for any $T\subset S$ a proper subset, otherwise the cardinality is $1$. 
Thus, given that $E\cong E^{*}$, we have that

\begin{align*}
h_{n} &= \frac{1}{p-1}\left(p^{n}-\sum_{i=1}^{n+1}(-1)^{i-1}\sum_{\substack{T\subset S \\ |T|=i}}\left| \bigcap_{\sigma_{i}\in T} H_{\sigma_{i}}\right|\right)\\
&= \frac{1}{p-1}\left(p^{n}+\sum_{i=1}^{n}(-1)^{i}{n+1\choose i}p^{n-i}+(-1)^{n+1}{n+1\choose n+1}\right)
\end{align*}
Thus,
\[h_{n}=\frac{1}{p-1}\left(\sum_{i=0}^{n}(-1)^{i}{n+1\choose i}p^{n-i}+(-1)^{n+1}\right).\]
This implies that 
\begin{align*}
p(p-1)h_{n}-(p-1)(-1)^{n+1} &=\sum_{i=0}^{n}(-1)^{i}{n+1\choose i}p^{n+1-i}+(-1)^{n+1}{n+1\choose n+1} \\
&= \sum_{i=0}^{n+1}(-1)^{i}{n+1\choose i}p^{n+1-i}=(p-1)^{n+1}.
\end{align*}
Then, $p(p-1)h_{n}=(p-1)^{n+1}+(p-1)(-1)^{n+1}$ and therefore
\[h_{n}=\frac{(p-1)^{n}-(-1)^{n}}{p}.\]

\end{proof}
 
 Then we have following result, which corresponds to part \eqref{maintheo1 part ii} of \cref{maintheo1}
 
\begin{proposition}\label{propositionnumberfactors}
Let $X_{(n,p)}$ be a generalized Fermat curve of type $(n,p)$. The number or factors of dimension $\frac{(n-|T|-1)(p-1)}{2}$ appearing in the decomposition of \cref{prooffirstpart} is
\[{n+1\choose |T|}\dfrac{(p-1)^{n-|T|}-(-1)^{n-|T|}}{p}.\]
\end{proposition}

Thus, we have the following consequences.

\begin{corollary}\label{proposition number factor 3p}
Let $p$ be a prime number. In a generalized Fermat curve of type $(3,p)$ there are
\begin{itemize} 
	\item $4(p-2)$ factors of dimension $(p-1)/2$ and
	\item $(p^{2}-3p+3)$ factors of dimension $p-1$.
\end{itemize}
\end{corollary}

\begin{proof}
By \cref{propositionnumberfactors}, the number of factors of dimension $(p-1)/2$ is 
\[{4\choose 1}\dfrac{(p-1)^{2}-(-1)^{2}}{p}=4\dfrac{p^{2}-2p}{p}=4(p-2)\]
and the number of factors of dimension $p-1$ is 
\[{4\choose 0}\dfrac{(p-1)^{3}-(-1)^{3}}{p}=\dfrac{p^{3}-3p^{2}+3p}{p}=p^{2}-3p+3.\]
\end{proof}

\begin{corollary}\label{proposition number factor 4p}
Let $p$ be a prime number. In a generalized Fermat curve of type $(4,p)$ there are 
\begin{itemize} 
	\item $10(p-2)$ factors of dimension $(p-1)/2$, 
	\item $5(p^{2}-3p+3)$ factors of dimension $(p-1)$ and 
	\item $(p^{3}-4p^{2}+6p-4)$ factors of dimension $3(p-1)/2$.
\end{itemize}
\end{corollary}

\begin{proof}
By \cref{propositionnumberfactors}, the number of factors of dimension $(p-1)/2$ is 
\[{5\choose 2}\dfrac{(p-1)^{2}-(-1)^{2}}{p}=10\dfrac{p^{2}-2p}{p}=10(p-2),\]
the number of factors of dimension $p-1$ is 
\[{5\choose 1}\dfrac{(p-1)^{3}-(-1)^{3}}{p}=5\dfrac{p^{3}-3p^{2}+3p}{p}=5(p^{2}-3p+3)\]
and the number of factors of dimension $3(p-1)/2$ is
\[{5\choose 0}\dfrac{(p-1)^{4}-(-1)^{4}}{p}=\dfrac{p^{4}-4p^{3}+6p^{2}-4p}{p}=p^{3}-4p^{2}+6p-4.\]
\end{proof}

\subsection{Counting elliptic curves in the decomposition}

Let $p$ be a prime number and $n$ be an integer $>3$. In the decomposition of a generalized Fermat curve of type $(n,p)$, given by \cref{prooffirstpart}, appear the decompositions of some generalized Fermat curves of type $(i,p)$, with $i\in\{3,\dots,n-1\}$, which are given by subsets of $S$. Thus, also by applying \cref{proposition induced decomposability}, we have the following result.

\begin{proposition}\label{proposition decomposability goes down}
Let $p$ be a prime number and $n$ be an integer $>3$. Let $X_{(n,p)}$ be a generalized Fermat curve of type $(n,p)$. If $J(X_{(n,p)})$ is completely decomposable, then there exist generalized Fermat curves of type $(i,p)$ such that $J(X_{(i,p)})$ is completely decomposable, for each $i\in\{3,\dots,n-1\}$.
\end{proposition}

\begin{proof}
By \cref{prooffirstpart}, for a generalized Fermat curve of type $(n,p)$ we have 
\[J(X_{(n,p)})\sim \bigoplus_{\substack{T\subset S \\ n-|T|\geq 2}}\bigoplus_{\substack{H\in\mathcal{H}_{T}^{S}(p)}}\pi_{H}^{*}J(X_{T}/H).\]

For each $T\subset S$ with $n-|T|\geq 2$, we have a morphism $\pi_{T}:X_{(n,p)}\to X_{T}$. Recall that $X_{T}$ is a generalized Fermat curve of type $(n-|T|,p)$. Then, \cref{proposition induced decomposability}, the jacobian of $X_{T}$ is completely decomposable. This holds for every $|T|\in\{0,\dots,n-3\}$. This proves the assertion.
\end{proof}

Then, completely decomposability goes down in the type of a generalized Fermat curve. In particular, if a generalized Fermat curve of type $X_{(3,p)}$ is completely decomposable, it is worth seeking further. For example, by \cref{prooffirstpart}, for a generalized Fermat curve of type $(3,2)$ we have 
\[J(X_{(3,2)})\sim  \bigoplus_{\substack{T\subset S \\ 3-|T|\geq 2}}\bigoplus_{\substack{H\in\mathcal{H}_{T}^{S}(2)}}\pi_{H}^{*}J(X_{T}/H)=\bigoplus_{\substack{H\in\mathcal{H}_{\emptyset}^{S}(2)}}\pi_{H}^{*}J(X_{(3,2)}/H),\]
where all the factors are of dimension $1$. Thus, the jacobian $J(X_{(3,2)})$ is completely decomposable and is isogenous to a product of elliptic curves. Indeed, given that $g_{(3,2)}=1$, the jacobian $J(X_{(3,2)})$ is an elliptic curve.

For generalized Fermat curves of type $(n,2)$ a particular phenomenon might occur. When $n$ is odd, completely decomposability could go up one step.

\begin{proposition}\label{proposition decomposability goes up for p=2}
Let $n\geq 3$ be an integer. If $n$ is odd and every generalized Fermat curve of type $(n,2)$ has a completely decomposable jacobian, then also every generalized Fermat curve of type $(n+1,2)$ has a completely decomposable jacobian.
\end{proposition}

\begin{proof}
Let $X_{(n+1,2)}$ be a generalized Fermat curve of type $(n+1,2)$. By \cref{prooffirstpart}, the jacobian of $X_{(n+1,2)}$ decomposes as follows
\[J(X_{(n+1,2)})\sim  \bigoplus_{\substack{T\subset S \\ n+1-|T|\geq 2}}\bigoplus_{\substack{H\in\mathcal{H}_{T}^{S}(2)}}\pi_{H}^{*}J(X_{T}/H).\]

When $T=\emptyset$ and $(n+1)$ is even, from \cref{section decomposition humbert-edge} and by \cref{thmproceedings1}, the set $\mathcal{H}_{T}^{S}(2)=\emptyset$. Then, given that $|T|\geq 1$, the jacobian $J(X_{(n+1,2)})$ is the direct sum of the decomposition of the jacobian of generalized Fermat curves of type $(n,2)$. Hence, since every generalized Fermat curve of type $(n,2)$ has a completely decomposable jacobian, it follows that $X_{(n+1,2)}$ has a completely decomposable jacobian. Thus, the assertion holds.
\end{proof}

This proposition has the following consequence.

\begin{proposition}
Every generalized Fermat curve of type $(4,2)$ has a completely decomposable jacobian and it is isogenous to a product of 5 elliptic curves.
\end{proposition}

\begin{proof}
Given that the jacobian of every generalized Fermat curve of type $(3,2)$ is an elliptic curve, by \cref{proposition decomposability goes up for p=2}, every generalized Fermat curve of type $(4,2)$ is completely decomposable. Moreover, \cref{equationgenusgenfermatcurve}, is a product of 5 elliptic curves.
\end{proof}

Ask for every generalized Fermat curve of a certain type to have a completely decomposable jacobian is a strong condition. However, it is just needed that all generalized Fermat curves of a smaller type appearing in the decomposition have a completely decomposable jacobian.

By \cite[Example 5.7]{CHQ16}, there exists a generalized Fermat curve of type $(6,2)$ having a completely decomposable jacobian that is a product of 49 elliptic curves. Then, by \cref{proposition decomposability goes down}, we have the following.

\begin{proposition}
There exists a generalized Fermat curve of type $(5,2)$ having a completely decomposable jacobian and it is a product of 17 elliptic curves.
\end{proposition}

\begin{proof}
Let $X_{(6,2)}$ be a generalized Fermat curve of type $(6,2)$ having a completely decomposable jacobian. By \cref{proposition decomposability goes down}, the jacobian variety of every generalized Fermat curve of type $(5,2)$ appearing in the decomposition given by \cref{prooffirstpart} has a completely decomposable jacobian. Hence, there exist generalized Fermat curves of type $(5,2)$ having completely decomposable jacobians. 
\end{proof}

By \cref{prooffirstpart}, for a generalized Fermat curve of type $(n,3)$ we have the following.
\begin{proposition}
Let $X_{(n,3)}$ be a generalized Fermat curve of type $(n,3)$ and $E_{(n,3)}$ as above. Then, with the notation above,  
\[J(X_{(n,3)})\sim \bigoplus_{\substack{T\subset S \\ n-|T|\geq 2}}\bigoplus_{\substack{H\in\mathcal{H}_{T}^{S}(3)}}\pi_{H}^{*}J(X_{T}/H),\] 
where $\pi_{H}:X_{(n,3)}\to X_{T}/H$ denotes the natural projection, and $\pi_{H}^{*}J(X_{T}/H)$ is of dimension $n-|T|-1$.
\end{proposition}

The genus of a generalized Fermat curve of type $(n,3)$ is $1+3^{n-1}(n-2)$, by \cref{equationgenusgenfermatcurve}.

When $n=3$, the factors of biggest dimensions are of dimension 2 and all the other ones are elliptic curves. Let $H\in \mathcal{H}_{T}^{S}(3)$ such that $X_{T}/H$ is a genus two curve. This curve has an action of $E_{(3,3)}/H$, which is a cyclic group of order three, and acts with exactly four fixed points. Then, by \cref{proposition Lemma 6.1 carvachohidalgoquispe}, $J(X_{T}/H)$ is isogenous to a product of elliptic curves. Therefore, we have the following result, which is also obtain in \cite{CHQ16}.

\begin{proposition}
A generalized Fermat curve of type $(3,3)$ is completely decomposable and is isogenous to a product of 10 elliptic curves.
\end{proposition}

\begin{proof}
By \cref{proposition number factor 3p}, the jacobian of a generalized Fermat curve of type $(3,3)$ is isogenous to a product of four elliptic curves and three abelian surfaces. Given that these abelian surfaces are the jacobian of a genus two surface admitting an action of an element of order three having four fixed points, by \cref{proposition Lemma 6.1 carvachohidalgoquispe}, the abelian surfaces are isogenous to the product of two elliptic curves. Hence, the jacobian of a generalized Fermat curve is isogenous to a product of ten elliptic curves, therefore, is completely decomposable. This proves the assertion.
\end{proof}

For a generalized Fermat curve of type $(n,3)$, with $n\geq 4$, we can give a lower bound in the number of elliptic curves appearing in the decomposition of the jacobian of such a curve. The following result corresponds to \cref{mainproposition}.

\begin{proposition}
The jacobian of a generalized Fermat curve of type $(n,3)$ has at least
\[\dfrac{n(n+1)(n-1)(3n-4)}{12}\]
 elliptic curves.
\end{proposition}

\begin{proof}
By \cref{maintheo1}, the number of factors of dimension $1$ corresponds to the summand with $|T|=n-2$. Therefore, there are
\[{n+1\choose n-2}\dfrac{2^{2}-(-1)^{2}}{3}=\dfrac{(n+1)(n)(n-1)}{6}\]
elliptic curves arising from the one dimensional factors.

By \cref{proposition Lemma 6.1 carvachohidalgoquispe}, we know that the factors of dimension two are isogenous to a product of elliptic curves. The number of abelian surfaces appearing in the decomposition corresponds to the summand with $|T|=n-3$. Therefore, there are 
\[{n+1\choose n-3}\dfrac{2^{3}-(-1)^{3}}{3}=\dfrac{(n+1)(n)(n-1)(n-2)}{8}\]
abelian surfaces in the decomposition. Hence, from the abelian surfaces, we have 
\[\dfrac{(n+1)(n)(n-1)(n-2)}{4}\]
elliptic curves.

The number of elliptic curves arising from the factors of dimension $1$ and $2$ are 
\[\dfrac{(n+1)(n)(n-1)}{6}+\dfrac{(n+1)(n)(n-1)(n-2)}{4}=\dfrac{(n+1)(n)(n-1)(3n-4)}{12}.\]
This proves the assertion.
\end{proof}

\end{document}